\newcommand{\bq }{\begin{equation}}
\newcommand{\eq }{\end{equation}}
\numberwithin{equation}{section}
\definecolor{lime}{HTML}{A6CE39}
\DeclareRobustCommand{\orcidicon}{
	\begin{tikzpicture}
	\draw[lime, fill=lime] (0,0)
	circle [radius=0.16]
	node[white] {{\fontfamily{qag}\selectfont \tiny ID}};
	\draw[white, fill=white] (-0.0625,0.095)
	circle [radius=0.007];
	\end{tikzpicture}
	\hspace{-2mm}
}
\begin{document}

\title{A two-grid temporal second-order scheme for the two-dimensional nonlinear Volterra integro-differential equation with weakly singular kernel}

\author{Hao Chen \and Mahmoud A. Zaky \\   Ahmed S. Hendy \and Wenlin Qiu}

\institute{H. Chen \at
MOE-LCSM, School of Mathematics and Statistics, Hunan Normal University, Changsha, Hunan 410081, P. R. China\\
\email{haochen9604@163.com}\\
M. A. Zaky \at
Department of Applied Mathematics, National Research Centre, Dokki, Cairo 12622, Egypt\\
\email{ma.zaky@yahoo.com}\\
A. S. Hendy \at
Department of Computational Mathematics and Computer Science, Institute of Natural Sciences and Mathematics, Ural Federal University, 19 Mira St., Yekaterinburg 620002, Russia;
\at
Department of Mathematics, Faculty of Science,
Benha University, Benha 13511, Egypt\\
\email{ahmed.hendy@fsc.bu.edu.eg}\\
W. Qiu (\Letter) \at
MOE-LCSM, School of Mathematics and Statistics, Hunan Normal University, Changsha, Hunan 410081, P. R. China\\
\email{qwllkx12379@163.com}
}
\date{Received: date / Accepted: date}

\maketitle

\begin{abstract}
In this paper, a two-grid temporal second-order  scheme for the two-dimensional nonlinear Volterra integro-differential equation with weakly singular kernel is proposed to   reduce the computation time and improve the accuracy of the scheme developed by Xu et al. (Applied Numerical Mathematics 152 (2020) 169-184).
The proposed  scheme consists of three steps:  First, a small nonlinear system is solved on the coarse grid using fix-point iteration. Second, the Lagrange's linear interpolation formula is used to arrive at some auxiliary values for analysis of the fine grid. Finally, a linearized Crank-Nicolson finite difference system is solved on the fine grid. Moreover, the algorithm uses a central difference approximation for the spatial derivatives. In the time direction, the time derivative and integral term are approximated by Crank-Nicolson technique and product integral rule, respectively.  With the help of the discrete energy method, the stability and space-time second-order convergence of the proposed approach are obtained in $L^2$-norm. Finally, the numerical results agree with the theoretical analysis and verify the effectiveness of the algorithm.
\keywords{Nonlinear fractional evolution equation \and time two-grid algorithm, accurate second order \and stability and convergence \and numerical experiments}
\end{abstract}

 \section{Introduction}
 \vskip 0.2mm
 In this paper, we consider the following two-dimensional nonlinear Volterra integro-differential equation with weakly singular kernel
 \begin{equation}\label{eq1.1}
   u_{t}-\mu\Delta u-I^{(\alpha)}\Delta u=f(x,y,t)+g(u),\quad (x,y,t)\in\Omega \times (0,T],
 \end{equation}
 with the initial-boundary conditions
 \begin{equation}\label{eq1.3}
   \begin{split}
       &u(x,y,0)=\psi(x,y),\qquad (x,y)\in\bar{\Omega},\\
       &   u(x,y,t)=0,\qquad (x,y,t)\in\partial\Omega\times (0,T],
   \end{split}
 \end{equation}
 where $\Omega=(0,L_{x})\times(0,L_{y})$ with the  boundary $\partial\Omega$, $\Delta=\partial^{2}/\partial x^{2}+\partial^{2}/\partial y^2$ is the two-dimensional Laplacian operator and $u_t=\partial u/\partial t$. In addition, $\alpha\in(0,1)$, $\mu\in[0,\infty)$ and $T\in(0,\infty)$ are given constants. $f(x,y,t)$ and $\psi(x,y)$ are given functions. The nonlinear term $g(u)\in C^{2}(\mathbf{R})\cap L^{1}(0,T]$ satisfies the Lipschitz condition $|g(u_1)-g(u_2)|\leq \bar{C}|u_1-u_2|$. Furthermore, The integral term $I^{(\alpha)}\Delta u(x,y,t)$ is defined \cite{podlubny1999fractional,qiao2021second} as follows
 \begin{equation}\label{eq1.4}
   I^{(\alpha)}\Delta u(x,y,t)=\int^t_0\rho_\alpha(t-s)\Delta u(x,y,s)ds,\quad \rho_\alpha(t)=\frac{t^{\alpha-1}}{\Gamma(\alpha)},\quad t> 0.
 \end{equation}
 \vskip 0.2mm
    In addition, throughout the article, we assume that problem \eqref{eq1.1}-\eqref{eq1.3} has a unique solution such that the following regularity assumptions \cite{mustapha2010second}:
    \begin{itemize}
        \item [\textbf{(A1)}] $u_t$, $u_{tyy}$, $u_{txx}$, $u_{xxxx}$ and $u_{yyyy}$ are continuous in $\bar{\Omega}\times [0,T]$;
      \item [\textbf{(A2)}] $u_{tt}$, $u_{ttt}$, $u_{ttxx}$ and $u_{ttyy}$ are continuous in $\bar{\Omega}\times (0,T]$, and there exists a positive constant $\bar{C}$ satisfying for $(x,y,t)\in\bar{\Omega}\times (0,T]$ that
     \begin{equation*}
     \begin{split}
  &   |u_{tt}(x,y,t)|\leq \bar{C}t^{\alpha-1}, \qquad |u_{ttt}(x,y,t)|\leq \bar{C}t^{\alpha-2}, \\
     &|u_{tt\kappa\kappa}(x,y,t)|\leq \bar{C}t^{\alpha-1}(\kappa=x,y).
     \end{split}
     \end{equation*}
    \end{itemize}
     Such integro-differential equations with Riemann-Liouville integral operators appear frequently in various mathematical and physical models. Problem \eqref{eq1.1}-\eqref{eq1.3} is a commonly used model for studying physical phenomena related to elastic forces. This model is mainly used in the problems of heat conduction, viscoelasticity and population dynamics of materials with memory \cite{friedman1967volterra,gurtin1968general,miller1978integrodifferential}. In viscoelastic problems, the parameter $\mu$ in this model represents the Newtonian contribution to viscosity, and the integral term represents the viscosity part of the equation.

     In recent years, high-precision computational methods for 2D partial integro-differential equations with weakly singular kernel, such as equation \eqref{eq1.1}, have been developed. The linear case of \eqref{eq1.1}-\eqref{eq1.3} has been deeply studied in the literature, e.g., see \cite{chen2017second,khebchareon2015alternating,kim1998spectral,larsson1998numerical,li2013alternating,wang2022crank}. Furthermore,   some numerical studies on the nonlinear case were introduced.
     Mustapha et al. \cite{mustapha2010second} applied the Crank-Nicolson scheme under graded meshes to solve semilinear integro-differential equation with weakly singular kernel. Dehghan et al. \cite{dehghan2017spectral} proposed a spectral element technique for solving nonlinear fractional evolution equation. In addition, some numerical methods for nonlinear partial differential equations have been proposed, and we can refer to the work in \cite{jiang2020adi,hendy2022energy,liao2019unconditional}.

     However, when solving 2D nonlinear problems, the resulting large systems of nonlinear equations require a large computational cost as the grid is continuously subdivided. In order to save the computational cost of nonlinear problems, a spatial two-grid finite element technique was proposed by Xu \cite{xu1996two,xu1994novel}.  Inspired by Xu's ideas, the two-grid method began to be intensively studied and applied to the solution of nonlinear parabolic equations. Dawson and Wheeler et al. \cite{dawson1998two} proposed a spatial two-grid finite difference method in solving nonlinear parabolic equations and analyzed the convergence of the method on coarse and fine grid. For solving the nonlinear time-fractional parabolic equation, Li et al. \cite{li2017two} obtained the numerical solution of this equation using the spatial two-grid block-centered finite difference scheme. For more work regarding the spatial two-grid methods, see \cite{bajpai2014two,chen2011two,chen2010two}. In addition, some scholars, inspired by the spatial two-grid method, started to consider using the two-grid method to solve the nonlinear equations in the time direction. Liu et al. \cite{liu2018time} proposed a new time two-grid finite element algorithm in order to solve the time fractional water wave model, and illustrated through numerical experiments that it has higher computational efficiency than the standard finite element method. In \cite{xu2020time}, a time two-grid backward Euler finite difference method is constructed to solve problem \eqref{eq1.1}-\eqref{eq1.3}. However, the time convergence order of the above methods cannot reach the exact second order.

     In this paper, we design an efficient  temporal two-grid Crank-Nicolson (TTGCN) finite difference method for solving problem \eqref{eq1.1}-\eqref{eq1.3}. In this approach, the time and space derivatives are approximated using the Crank-Nicolson technique and the central difference formula, respectively, and the Riemann-Liouville integral term is approximated by the product integration rule designed in \cite{mclean2007second}. Then, this algorithm is divided into three steps: First, a small nonlinear system is solved on a coarse grid. Second, based on the solution of the first step, the values of each node are obtained by linearization technique as the auxiliary approximate solution. Finally, we approximate the nonlinear term $g(U^{n})$ by a Taylor expansion and solve the linear system on a fine grid. Furthermore, under the regularity assumptions \textbf{(A1)} and \textbf{(A2)}, we prove that this algorithm is stability and the convergence of order $O(\tau_C^4+\tau_F^2+h_1^2+h_2^2)$, where $\tau_C$ and $\tau_F$ are the time steps of the coarse and fine grid, respectively. Also, the linearization technique is used on the fine grid, so the TTGCN finite difference algorithm has the advantage of both ensuring accuracy and improving computational efficiency. In addition, the numerical results in this paper show that the TTGCN finite difference algorithm is more efficient than the standard Crank-Nicolson (SCN) finite difference method without loss of accuracy. Meanwhile, our algorithm can achieve second-order convergence in time compared to the method in \cite{xu2020time}.

     The remainder of this paper is structured as follows. In Section \ref{sec2}, we give some notations and useful lemmas. Then, the TTGCN finite difference scheme is established in Section \ref{secc3}. In Section \ref{secc4}, the stability and convergence of the TTGCN finite difference method are analyzed by the energy method. Moreover, some numerical results are given in Section \ref{secc5}.

The generic positive constant $\bar{C}$ is independent of  the temporal step size and the spatial step size, moreover, it is not necessarily same in different situations.

   \section{Preliminaries}\label{sec2}

   \vskip 0.2mm
     In this section, we shall provide some useful notations and lemmas which will be used for the forthcoming work. First, for a positive integer $\mathcal{N}$, we define the time-step size on the fine grid as $\tau=\tau_{F}=T/\mathcal{N}$ with $t_{n}=n\tau_{F}(0\leq n\leq \mathcal{N})$. Similarly, for the coarse grid, the time-step size is $\tau_{C}=T/N$, $t_{s}=s\tau_{C}(0\leq s\leq N)$ for positive integer $N$, where $N=\mathcal{N}/k$, $k\geq 2$ and $k\in \mathbb{Z^{+}}$. For any grid function $\varphi^{n}(1\leq n\leq \mathcal{N})$ on $(0,T]$, we define
     \begin{equation*}
       \delta_{t}\varphi^{n}=\frac{\varphi^{n}-\varphi^{n-1}}{\tau},\qquad \varphi^{n-\frac{1}{2}}=\frac{\varphi^{n}+\varphi^{n-1}}{2}.
     \end{equation*}
     Then, we define the grid functions as following
      \begin{equation*}
     u^{n}=u(x,y,t_{n}), \quad f^{n}=f(x,y,t_{n}), \quad 0\leq n \leq \mathcal{N}.
     \end{equation*}
     We integrate the equation (\ref{eq1.1}) from $t=t_{n-1}$ to $t_{n}$ and then multiply by $\frac{1}{\tau}$, we obtain
     \begin{equation}\label{eq2.1}
      \delta_{t}u^{n}-\frac{\mu}{\tau}\int\limits^{t_{n}}_{t_{n-1}}\Delta u(\cdot,t)dt -\frac{1}{\tau}\int\limits^{t_{n}}_{t_{n-1}}I^{(\alpha)}\Delta u(\cdot,t)dt
      =\frac{1}{\tau}\int\limits^{t_{n}}_{t_{n-1}}f(\cdot,t)dt+ \frac{1}{\tau}\int\limits^{t_{n}}_{t_{n-1}}g(u(\cdot,t))dt.
     \end{equation}

      \vskip 0.2mm

     To approximate the integral term of equation \eqref{eq2.1}, from \cite{mclean2007second,wang2022crank}, we obtain the quadrature approximation with the uniform time step
     \begin{equation}\label{eq2.2}
       \begin{array}{ll}
        \frac{1}{\tau}\int\limits^{t_{n}}_{t_{n-1}}\Delta u(\cdot,t)dt=
        \begin{cases}
          \Delta u^{1}+(R1)^{1}, \\
          \Delta u^{n-\frac{1}{2}}+(R1)^{n},\quad  2\leq n\leq \mathcal{N},
        \end{cases}
       \end{array}
     \end{equation}
     and
     \begin{equation}\label{eq2.3}
     \begin{array}{lll}
     \frac{1}{\tau}\int\limits^{t_{n}}_{t_{n-1}} I^{(\alpha)}\Delta u(\cdot,t)dt =\\
     \quad
     \begin{cases}
       \frac{1}{\tau}\int\limits^{t_{1}}_{t_{0}}\int\limits_{t_{0}}^{t_{1}}\rho_{\alpha}(t-s)\Delta u^{1}dsdt+(R2)^{1},\\
       \\
       \frac{1}{\tau}\int\limits^{t_{n}}_{t_{n-1}}\int\limits_{t_{0}}^{t_{1}}\rho_{\alpha}(t-s)\Delta u^{1}dsdt\\
       +\frac{1}{\tau}\int\limits^{t_{n}}_{t_{n-1}}\sum\limits^{n}_{m=2}
       \int\limits_{t_{m-1}}^{\min\{t,t_{m}\}}\rho_{\alpha}(t-s)\Delta u^{m-\frac{1}{2}}dsdt+(R2)^{n}, \quad  2\leq n\leq \mathcal{N},
     \end{cases}
     \end{array}
     \end{equation}
     where $(R1)^{n}$ and $(R2)^{n}$ are the local truncation errors.

   \vskip 0.2mm
   For any grid function $\varphi^{n}(1\leq n\leq \mathcal{N})$, we define the following two operators
   \begin{equation}\label{eq2.4}
     \begin{array}{lll}
     \mathfrak{L}_{1,\tau}^{n}(\varphi^{n})=
     \begin{cases}
       \varphi^{1}, &  \\
       \varphi^{n-\frac{1}{2}}, & n\geq 2,\quad
     \end{cases}
     \\
     \mathfrak{L}_{2,\tau}^{n}(\varphi^{n})=
     \begin{cases}
       \mathfrak{w}_{1,1}\varphi^{1}, &  \\
       \mathfrak{w}_{n,1}\varphi^{1}+\sum\limits^{n}_{m=2}\mathfrak{w}_{n,m}\varphi^{m-\frac{1}{2}}, & n\geq 2,
     \end{cases}

     \end{array}
     \end{equation}
   where
   \begin{equation}\label{eq2.5}
     \mathfrak{w}_{n,m}=\frac{1}{\tau}\int\limits_{t_{n-1}}^{t_{n}} \int\limits_{t_{m-1}}^{\min\{t,t_{m}\}}\rho_{\alpha}(t-s)dsdt.
   \end{equation}

   Therefore, for $n\geq 2$ and $1\leq m\leq n-1$, we can get that
   \begin{equation}\label{eq2.6}
   \begin{split}
     & \mathfrak{w}_{n,m}  = \frac{1}{\tau}\int\limits^{t_{n}}_{t_{n-1}}\int\limits_{t_{m-1}}^{t_{m}}
     \rho_{\alpha}(t-s)dsdt \\
        &=\frac{\left[(t_{n}-t_{m-1})^{\alpha+1}-(t_{n}-t_{m})^{\alpha+1}\right] -\left[(t_{n-1}-t_{m-1})^{\alpha+1}-(t_{n-1}-t_{m})^{\alpha+1}\right]}{\tau\Gamma(2+\alpha)}
   \end{split}
     \end{equation}
   and
   \begin{equation}\label{eq2.7}
     \mathfrak{w}_{n,n}=\frac{1}{\tau}\int\limits^{t_{n}}_{t_{n-1}}\int\limits_{t_{n-1}}^{t}
     \rho_{\alpha}(t-s)dsdt=\frac{\tau^{\alpha}}{\Gamma(2+\alpha)},\qquad 1\leq n\leq \mathcal{N}.
     \end{equation}
   Then the equations \eqref{eq2.2} and \eqref{eq2.3} can be rewritten as follows
   \begin{equation}\label{eq2.8}
     \frac{1}{\tau}\int\limits_{t_{n-1}}^{t_{n}}\Delta u(\cdot,t)dt = \mathfrak{L}_{1,\tau}^{n}(\Delta u^{n})+(R1)^n, \qquad 1\leq n \leq \mathcal{N},
   \end{equation}
   \begin{equation}\label{eq2.9}
     \frac{1}{\tau}\int\limits_{t_{n-1}}^{t_{n}}I^{(\alpha)}\Delta u(\cdot,t)dt=
     \mathfrak{L}_{2,\tau}^{n}(\Delta u^{n})+(R2)^{n}, \qquad 1\leq n \leq \mathcal{N}.
     \end{equation}
  \vskip 0.2mm
  For the spatial approximation, defining the space-step size $h_1=L_x/M_x$, $h_2= L_y/M_y$, $h = \max\{h_1,h_2\}$ for two positive integers $M_x$ and $M_y$, we arrive at $x_i = ih_1$ and $y_j = jh_2$. Denote $ \bar{\Omega}_h = \{(x_i,y_j)| 0\leq i \leq M_x, 0\leq j \leq M_y \}$, $\Omega_h = \bar{\Omega}_h \cap \Omega$ and $\partial\Omega_h= \Omega_h\cap \partial \Omega$. Let the grid function $Z_{h}=\{z_{ij}|0\leq i\leq M_{x},0\leq j\leq M_{y}\}$ on $\Omega_h$, then we denote the following notations
  \begin{equation*}
  \begin{split}
     \delta_{x}z_{i+\frac{1}{2},j}=\frac{z_{i+1,j}-z_{ij}}{h_{1}}, & \qquad \delta_{x}^{2}z_{ij}= \frac{\delta_{x}z_{i+\frac{1}{2},j}-\delta_{x}z_{i-\frac{1}{2},j}}{h_1}, \\
     \delta_{y}z_{i,j+\frac{1}{2}}=\frac{z_{i,j+1}-z_{ij}}{h_{2}}, & \qquad \delta_{y}^{2}z_{ij}= \frac{\delta_{y}z_{i,j+\frac{1}{2}}-\delta_{y}z_{i,j-\frac{1}{2}}}{h_2}.
     \end{split}
  \end{equation*}
  Also, the discrete Laplace operator is defined by $\Delta_h=\delta_x^2+\delta_y^2$.
  \vskip 0.2mm
  Then, for any grid function $z, v\in \Omega_h$, some norm and inner product are defined as follows
    \begin{equation*}
  \begin{split}
   (z,v) = h_1h_2\sum_{i=1}^{M_x-1}\sum_{j=1}^{M_y-1}z_{ij}v_{ij}, \quad \| z \| =  \sqrt{( z,z)}, \quad \| z \|_{\infty} = \max_{\substack{1\leq i\leq M_x-1, \\ 1\leq j\leq M_y-1}}|z_{ij}|,\\
    \| \delta_xz \| = \sqrt{h_1h_2\sum_{i=0}^{M_x-1}\sum_{j=1}^{M_y-1}(\delta_x z_{i+\frac{1}{2},j})^2},\quad \| \delta_y z \| = \sqrt{h_1h_2 \sum_{i=1}^{M_x-1}\sum_{j=0}^{M_y-1}(\delta_x z_{i,j+\frac{1}{2}})^2}.
     \end{split}
  \end{equation*}

   \vskip 0.2mm
   Next, some auxiliary lemmas will be given.
   \begin{lemma}\label{lemma2.1}\cite{dedic2001euler} Suppose $g(u(\cdot,t))\in C^{2}(\mathbf{R})\cap L^1(0,T)$, then it holds that
   \begin{equation*}
     \left|\int^{t_n}_{t_{n-1}}g(u(\cdot,t))dt-\frac{t_{n}-t_{n-1}}{2}\Big[g(u(\cdot,t_n)) +g(u(\cdot,t_{n-1}))\Big]\right|\leq \frac{(t_{n}-t_{n-1})^3}{12}\|g^{\prime\prime}\|_{\infty},
   \end{equation*}
   where $\|g^{\prime\prime}\|_{\infty} =\sup\limits_{\xi\in(t_{n-1},t_{n})}|g^{\prime\prime}(u(\cdot,\xi))|<\infty$.
   \end{lemma}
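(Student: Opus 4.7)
The plan is to reduce the statement to the classical one‑interval trapezoidal quadrature error bound, applied to the composite function $\phi(t):=g(u(\cdot,t))$ (with $(x,y)$ held fixed). Under the hypothesis $g\in C^2(\mathbb R)$ together with the regularity on $u$, a single application of the chain rule yields $\phi''(t)=g''(u)u_t^{\,2}+g'(u)u_{tt}$, so $\phi\in C^2$ on the relevant interval and $\|\phi''\|_\infty$ is exactly the supremum appearing on the right‑hand side.

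Next, I would introduce an auxiliary error function parametrized by the length of the subinterval. Writing $h:=t_n-t_{n-1}$, I set
\begin{equation*}
E(s):=\int_{t_{n-1}}^{t_{n-1}+s}\phi(\xi)\,d\xi-\frac{s}{2}\bigl[\phi(t_{n-1})+\phi(t_{n-1}+s)\bigr],\qquad s\in[0,h].
\end{equation*}
The target estimate is $|E(h)|\le \tfrac{h^3}{12}\|\phi''\|_\infty$. Differentiating under the integral sign (using the fundamental theorem of calculus for the first term and the product rule for the second) and simplifying gives $E'(s)=\tfrac{1}{2}[\phi(t_{n-1}+s)-\phi(t_{n-1})]-\tfrac{s}{2}\phi'(t_{n-1}+s)$, so that $E(0)=E'(0)=0$, and a second differentiation collapses to the clean identity $E''(s)=-\tfrac{s}{2}\phi''(t_{n-1}+s)$.

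I would then finish by two successive integrations. From $|E''(s)|\le \tfrac{s}{2}\|\phi''\|_\infty$ and $E'(0)=0$, integrating yields $|E'(s)|\le \tfrac{s^2}{4}\|\phi''\|_\infty$; using $E(0)=0$, a second integration gives $|E(h)|\le \tfrac{h^3}{12}\|\phi''\|_\infty$, which is precisely the desired bound after substituting $h=t_n-t_{n-1}$.

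There is no real obstacle here: the statement is the single‑subinterval trapezoidal rule error, and the only thing that requires a sentence of attention is justifying that $\|\phi''\|_\infty<\infty$ on $(t_{n-1},t_n)$, which follows from $g\in C^2$ and the assumed regularity of $u$ in $t$; an equivalent route would be to use the Peano kernel representation $E=\int_{t_{n-1}}^{t_n}\tfrac12(\xi-t_{n-1})(t_n-\xi)\,\phi''(\xi)\,d\xi$ and bound the kernel directly, but the two‑fold integration argument above is the most self‑contained.
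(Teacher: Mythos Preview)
Your argument is correct: this is precisely the single-interval trapezoidal rule error estimate, and the auxiliary-function approach with $E(0)=E'(0)=0$ and $E''(s)=-\tfrac{s}{2}\phi''(t_{n-1}+s)$ followed by two integrations is a clean, self-contained derivation.

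The paper does not supply its own proof of this lemma; it simply cites the result from the literature (Dedi\'c et al.). So there is nothing to compare against at the level of argument structure. Your proof is exactly the standard one that the cited reference would contain, and the Peano-kernel alternative you mention would give the same constant.

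One small point of clarification worth keeping in mind: the paper's notation $\|g''\|_\infty=\sup_{\xi}|g''(u(\cdot,\xi))|$ is slightly ambiguous, and the bound only holds if it is read (as you correctly do) as the supremum of the second \emph{time} derivative of the composite map $t\mapsto g(u(\cdot,t))$, i.e.\ your $\|\phi''\|_\infty$, not merely $g''$ evaluated along $u$. Your chain-rule identification $\phi''=g''(u)u_t^{2}+g'(u)u_{tt}$ makes this explicit, which is an improvement over the statement as written.
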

   \vskip 0.2mm
   According to Taylor series expansion with integral remainder term, we can obtain the following lemma.
   \vskip 0.2mm
   \begin{lemma}\label{lemma2.2} \cite{chen2015alternating} Assume $v(x,y)\in C_{x,y}^{4,4}([0,L_x]\times[0,L_y])$, then it satisfies that
    \begin{equation*}
      \frac{\partial^2v}{\partial x^2}(x_i,y_j) =\delta_x^2v(x_i,y_j)
      - \frac{h_1^2}{6}\int_0^1\biggl[\frac{\partial^4v}{\partial x^4}(x_i+wh_1,y_j)+\frac{\partial^4v}{\partial x^4}(x_i-wh_1,y_j)\biggl](1-w)^3dw,
    \end{equation*}
    \begin{equation*}
    \frac{\partial^2v}{\partial y^2}(x_i,y_j) = \delta_y^2v(x_i,y_j)
    - \frac{h_2^2}{6}\int_0^1\biggl[\frac{\partial^4v}{\partial y^4}(x_i,y_j+wh_2)+\frac{\partial^4v}{\partial y^4}(x_i,y_j-wh_2)\biggl](1-w)^3dw.
    \end{equation*}
    \end{lemma}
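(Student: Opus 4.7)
The plan is to obtain both identities by a straightforward symmetric Taylor expansion with integral remainder, so no new machinery beyond the stated regularity $v\in C_{x,y}^{4,4}$ is required. I will only present the $x$-derivative case in detail, since the $y$-case is verbatim with $(x_i,h_1)$ replaced by $(y_j,h_2)$.

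First I would write Taylor's formula to order three with integral remainder in the $x$ variable (with $y_j$ held fixed), using the standard form
\begin{equation*}
f(a\pm h) = f(a) \pm h f'(a) + \tfrac{h^2}{2} f''(a) \pm \tfrac{h^3}{6} f'''(a) + \tfrac{h^4}{6}\int_0^1 (1-w)^3 f^{(4)}(a\pm wh)\,dw,
\end{equation*}
applied to $f(x)=v(x,y_j)$ with $a=x_i$ and $h=h_1$. This gives two expansions, one for $v(x_i+h_1,y_j)$ and one for $v(x_i-h_1,y_j)$. Note that the integral remainder requires exactly the regularity assumed, namely continuity of $\partial^4 v/\partial x^4$ on $[x_i-h_1,x_i+h_1]\times\{y_j\}\subset[0,L_x]\times[0,L_y]$.

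Next I would add the two expansions. The odd-order terms $\pm h_1 v_x$ and $\pm h_1^3 v_{xxx}/6$ cancel, leaving
\begin{equation*}
v(x_i+h_1,y_j)+v(x_i-h_1,y_j) = 2v(x_i,y_j)+h_1^2\,\tfrac{\partial^2 v}{\partial x^2}(x_i,y_j)+\tfrac{h_1^4}{6}\int_0^1(1-w)^3\Bigl[\tfrac{\partial^4 v}{\partial x^4}(x_i+wh_1,y_j)+\tfrac{\partial^4 v}{\partial x^4}(x_i-wh_1,y_j)\Bigr]dw.
\end{equation*}
Dividing through by $h_1^2$, recognizing the left side minus $2v(x_i,y_j)$ divided by $h_1^2$ as $\delta_x^2 v(x_i,y_j)$, and solving for $\partial^2 v/\partial x^2(x_i,y_j)$ yields exactly the first identity of the lemma. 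The $y$-direction identity follows by the same argument applied to $g(y)=v(x_i,y)$ at $y=y_j$ with step $h_2$.

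There is no real obstacle here: the only subtlety is keeping the integral remainder in the right form (with the $(1-w)^3$ weight and the symmetric evaluation points $a\pm wh$), which is a single substitution $t=a\pm wh$ in the standard integral-remainder formula $\frac{1}{3!}\int_a^{a\pm h}(a\pm h-t)^3 f^{(4)}(t)\,dt$. Once that normalization is fixed, addition cancels the odd terms and the lemma drops out immediately.
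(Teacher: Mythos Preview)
Your proof is correct and follows exactly the approach the paper indicates: the lemma is stated (with a citation) immediately after the remark ``According to Taylor series expansion with integral remainder term, we can obtain the following lemma,'' and no further proof is given. Your symmetric Taylor expansion with integral remainder, adding the $\pm h$ expansions to cancel the odd-order terms, is precisely the intended argument.
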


   \vskip 0.2mm
   For further analysis, we present the following important lemmas.
   \begin{lemma}\label{lemma2.3} Assume that the solution $u$ of the problem (\ref{eq1.1})-(\ref{eq1.3}) satisfies the regularity assumptions \textbf{(A1)} and \textbf{(A2)}, then we obtain that
   \begin{equation*}
   \begin{array}{ll}
       \tau\sum\limits^{n}_{m=1}\|(R1)^{m}\|\leq \bar{C}\tau^{2}, \qquad 1\leq n \leq \mathcal{N}.
   \end{array}
   \end{equation*}
   \end{lemma}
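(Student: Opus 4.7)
The plan is to estimate $\|(R1)^m\|$ separately for $m=1$ and $m \geq 2$, then sum. From the definition in \eqref{eq2.2} and the operator $\mathfrak{L}_{1,\tau}^{n}$, we have
\begin{equation*}
(R1)^{1} = \frac{1}{\tau}\int_{0}^{\tau}\bigl[\Delta u(\cdot,t)-\Delta u(\cdot,\tau)\bigr]\,dt,\qquad (R1)^{n} = \frac{1}{\tau}\int_{t_{n-1}}^{t_{n}}\bigl[\Delta u(\cdot,t)-\Delta u^{n-\frac{1}{2}}\bigr]\,dt \text{ for } n\geq 2.
\end{equation*}

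For $n\geq 2$, I would Taylor expand $\Delta u(\cdot,t)$ around $t_{n-\frac{1}{2}}$ so that the linear term integrates to zero over the symmetric interval $[t_{n-1},t_n]$, leaving the quadratic remainder
\begin{equation*}
(R1)^{n} = \frac{1}{2\tau}\int_{t_{n-1}}^{t_{n}} \Delta u_{tt}(\cdot,\xi_{t})\bigl(t-t_{n-\frac{1}{2}}\bigr)^{2}\,dt.
\end{equation*}
Applying the regularity bound $|u_{tt\kappa\kappa}(x,y,t)|\leq \bar{C}t^{\alpha-1}$ from (A2) together with the triangle inequality in the discrete $L^2$-norm, and noting that $t^{\alpha-1}$ is decreasing on $[t_{n-1},t_n]$, yields
\begin{equation*}
\|(R1)^{n}\| \leq \bar{C}\tau^{2}\,t_{n-1}^{\alpha-1} = \bar{C}\tau^{1+\alpha}(n-1)^{\alpha-1},\qquad n\geq 2.
\end{equation*}
For $n=1$ the midpoint symmetry fails and $u_{tt}$ is singular at $0$, so I would instead use only the continuity of $\Delta u_t$ on $\bar\Omega\times[0,T]$ from (A1) (implied by $u_{txx}$, $u_{tyy}\in C(\bar\Omega\times[0,T])$) to write $\Delta u(\cdot,t)-\Delta u(\cdot,\tau) = -\int_t^\tau \Delta u_s(\cdot,s)\,ds$, obtaining the coarser but sufficient bound $\|(R1)^{1}\|\leq \bar{C}\tau$.

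Combining these two estimates gives
\begin{equation*}
\tau\sum_{m=1}^{n}\|(R1)^{m}\| \leq \bar{C}\tau^{2} + \bar{C}\tau^{2+\alpha}\sum_{m=2}^{n}(m-1)^{\alpha-1}.
\end{equation*}
The remaining step is to control $\sum_{m=2}^{n}(m-1)^{\alpha-1} = \sum_{k=1}^{n-1}k^{\alpha-1}$ by comparison with $\int_0^{n}x^{\alpha-1}dx$, which since $\alpha\in(0,1)$ gives a bound of order $n^{\alpha}/\alpha$. Therefore $\tau^{2+\alpha}\sum_{m=2}^{n}(m-1)^{\alpha-1} \leq \bar{C}\tau^{2}(n\tau)^{\alpha}\leq \bar{C}T^{\alpha}\tau^{2}$, and absorbing $T^{\alpha}$ into $\bar{C}$ finishes the proof.

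The only subtle point is the initial step: one cannot use the midpoint-type cancellation at $n=1$ because $u_{tt}$ is not integrable at $0$ with a square weight, so the weaker $O(\tau)$ bound there must be compensated by the single outer factor of $\tau$. The rest is a routine weighted-sum estimate exploiting that $\alpha>0$ makes $\sum k^{\alpha-1}$ grow only like $n^{\alpha}$, exactly the rate that converts $\tau^{2+\alpha}$ into $\tau^{2}$ after multiplication by $T^{\alpha}$.
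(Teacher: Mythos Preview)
Your argument is correct, with one small caveat: in the paper's notation $\Delta u^{n-\frac{1}{2}}$ denotes the arithmetic mean $\tfrac{1}{2}(\Delta u^{n}+\Delta u^{n-1})$, not the midpoint value $\Delta u(\cdot,t_{n-\frac{1}{2}})$. Your displayed formula for $(R1)^n$ therefore tacitly drops the extra piece $\Delta u(\cdot,t_{n-\frac12}) - \tfrac12(\Delta u^n + \Delta u^{n-1})$; but this term is also $O(\tau^2\,t_{n-1}^{\alpha-1})$ by the same second-order Taylor expansion, so the bound $\|(R1)^{n}\|\le \bar{C}\tau^{2}\,t_{n-1}^{\alpha-1}$ and everything that follows remain valid.

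The paper proceeds a bit differently for $n\ge2$: it observes that $\Delta u^{n-\frac12}$ equals the average over $[t_{n-1},t_n]$ of the linear interpolant $\tfrac{t_n-t}{\tau}\Delta u^{n-1}+\tfrac{t-t_{n-1}}{\tau}\Delta u^{n}$, bounds the interpolation error pointwise by $2\tau\int_{t_{n-1}}^{t_n}|\Delta u_{ss}(\cdot,s)|\,ds$, and thereby gets $|(R1)^n|\le \bar{C}\tau\,(t_n^{\alpha}-t_{n-1}^{\alpha})$. The sum over $m$ then \emph{telescopes} directly to $\bar{C}\tau^{2}\,t_n^{\alpha}$, so no separate integral comparison for $\sum_{k}k^{\alpha-1}$ is needed. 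Your midpoint-Taylor route is equally legitimate and perhaps more transparent; the paper's interpolation-error route buys a slightly cleaner summation via telescoping. For $m=1$ the two proofs are essentially identical, both relying only on the continuity of $\Delta u_t$ from assumption~\textbf{(A1)}.
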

   \begin{proof} Through simple calculation, we yield
   \begin{equation}\label{eq2.10}
     (R1)^{1}=\frac{1}{\tau}\int\limits^{t_{1}}_{t_0}\left[\Delta u(\cdot,t)-\Delta u^{1}\right]dt,
   \end{equation}
   \begin{equation}\label{eq2.11}
     (R1)^{n}=\frac{1}{\tau}\int\limits^{t_{n}}_{t_{n-1}}\left[\Delta u(\cdot,t)-\left(\frac{t_{n}-t}{\tau}\Delta u^{n-1}+\frac{t-t_{n-1}}{\tau}\Delta u^{n}\right)\right]dt.
   \end{equation}
   Using Taylor expansion with integral remainder term, we have
   \begin{equation}\label{eq2.12}
     \Delta u(\cdot,t)-\Delta u^{1}=\Delta u(\cdot,t)-\Delta u(\cdot,t_{1})=-\int\limits^{t_{1}}_{t}\Delta u_s(\cdot,s)ds,\quad t_{0}\leq t\leq t_{1},
   \end{equation}
   therefore
   \begin{equation}\label{eq2.13}
     (R1)^{1}=-\frac{1}{\tau}\int\limits^{t_1}_{t_0}\int\limits^{t_1}_{t}\Delta u_s(\cdot,s)dsdt=-\frac{1}{\tau}\int\limits^{t_1}_{t_0}\int\limits^{s}_{t_0}\Delta u_s(\cdot,s)dtds=-\frac{1}{\tau}\int\limits^{t_1}_{t_0}s\Delta u_s(\cdot,s)ds.
   \end{equation}
   The continuity of $u_{t\kappa\kappa}(x,y,t)(\kappa=x,y)$ in $\bar{\Omega}\times[0,T]$ implies
   \begin{equation}\label{eq2.14}
     \tau\|(R1)^{1}\|\leq \bar{C}\tau^{2}.
   \end{equation}
  Similarly, from Taylor expansion with integral remainder term, we obtain
   \begin{equation}\label{eq2.15}
     \left|\Delta u(\cdot,t)-\frac{t_{n}-t}{\tau}\Delta u^{n-1}-\frac{t-t_{n-1}}{\tau}\Delta u^{n}\right|\leq 2\tau\int\limits^{t_n}_{t_{n-1}}|\Delta u_{ss}(\cdot,s)|ds,\quad n\geq 2,
   \end{equation}
   then
   \begin{equation}\label{eq2.16}
     |(R1)^{n}|\leq 2\int\limits^{t_n}_{t_{n-1}}\int\limits^{t_n}_{t_{n-1}}\left| \Delta u_{ss}(\cdot,s)\right|dsdt\leq \bar{C}\tau\int\limits^{t_n}_{t_{n-1}}s^{\alpha-1}ds= \bar{C}\tau(t_{n}^{\alpha}-t_{n-1}^{\alpha}).
   \end{equation}
   This proves
   \begin{equation}\label{eq2.17}
     \tau\sum\limits^{n}_{m=2}\|(R1)^{m}\|\leq \bar{C}\tau^{2}(t_{n}^{\alpha}-t_{1}^{\alpha})\leq \bar{C}\tau^2.
   \end{equation}
   The proof is completed.
   \end{proof}

   \vskip 0.2mm
   \begin{lemma}\label{lemma2.4} Suppose that the solution $u$ of the problem (\ref{eq1.1})-(\ref{eq1.3}) satisfies the regularity assumptions \textbf{(A1)} and \textbf{(A2)}. Then we can obtain the following
   \begin{equation}\label{eq2.18}
   \begin{array}{ll}
       \tau\sum\limits^{n}_{m=1}\|(R2)^{m}\|\leq \bar{C}\tau^{2}, \qquad 1\leq n \leq \mathcal{N}.
   \end{array}
   \end{equation}
   \end{lemma}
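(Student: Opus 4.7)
The plan is to mirror the proof of Lemma~\ref{lemma2.3}, splitting $(R2)^n$ along the two cases of the quadrature rule in \eqref{eq2.3} and estimating each piece using assumptions (A1), (A2) together with the integrability properties of the weakly singular kernel $\rho_\alpha$. For $n\ge 2$ I decompose $(R2)^n=E_1^n+E_2^n$, where
\begin{equation*}
E_1^n=\frac{1}{\tau}\int_{t_{n-1}}^{t_n}\int_{t_0}^{t_1}\rho_\alpha(t-s)\bigl[\Delta u(\cdot,s)-\Delta u^1\bigr]\,ds\,dt
\end{equation*}
accounts for the constant approximation on the initial subinterval $[t_0,t_1]$, and
\begin{equation*}
E_2^n=\frac{1}{\tau}\int_{t_{n-1}}^{t_n}\sum_{m=2}^{n}\int_{t_{m-1}}^{\min\{t,t_m\}}\rho_\alpha(t-s)\bigl[\Delta u(\cdot,s)-\Delta u^{m-\frac{1}{2}}\bigr]\,ds\,dt
\end{equation*}
accounts for the midpoint-type approximation on the later subintervals; the $n=1$ case reduces to $E_1^1$.

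For $E_1^n$, assumption (A1) gives $\|\Delta u(\cdot,s)-\Delta u^1\|\le \bar{C}(t_1-s)\le \bar{C}\tau$ on $[t_0,t_1]$. Using the identity $\int_{t_0}^{t_1}\rho_\alpha(t-s)\,ds=[(t-t_0)^\alpha-(t-t_1)^\alpha]/\Gamma(\alpha+1)$ to carry out the inner integration and summing in $n$, the resulting telescoping $t$-integral is bounded by $\bar{C}[T^{\alpha+1}-\tau^{\alpha+1}-(T-\tau)^{\alpha+1}]/\Gamma(\alpha+2)\le \bar{C}\tau$, yielding $\tau\sum_{n=1}^{N}\|E_1^n\|\le \bar{C}\tau^2$. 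For $E_2^n$, I would Taylor-expand $\Delta u(\cdot,s)$ about the midpoint $t_{m-\frac{1}{2}}$ with integral remainder; because the symmetric value $\Delta u^{m-\frac{1}{2}}=\tfrac{1}{2}(\Delta u^m+\Delta u^{m-1})$ cancels the constant piece up to a quadratic remainder, this gives
\begin{equation*}
\Delta u(\cdot,s)-\Delta u^{m-\frac{1}{2}}=\Delta u_t(\cdot,t_{m-\frac{1}{2}})(s-t_{m-\frac{1}{2}})+\mathcal{R}_m(s),
\end{equation*}
with $\|\mathcal{R}_m(s)\|\le \bar{C}\tau\int_{t_{m-1}}^{t_m}\|\Delta u_{rr}(\cdot,r)\|\,dr\le \bar{C}\tau\int_{t_{m-1}}^{t_m}r^{\alpha-1}\,dr$ by (A2). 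Integrating $\mathcal{R}_m$ against $\rho_\alpha(t-s)$, swapping the $m$-sum with the $t$-integral by Fubini, and using the integrability $\int_0^T r^{\alpha-1}\,dr<\infty$ controls the remainder contribution by $\bar{C}\tau^2$. The odd-symmetric linear piece is handled by Taylor-expanding the smooth factor $\rho_\alpha(t-s)$ at $s=t_{m-\frac{1}{2}}$ for $m<n$ (where $t-s\ge \tau$); the zeroth term integrates to zero by oddness, and the first-order term generates a telescoping kernel difference whose sum is controlled by the integrability of $\rho_\alpha'$ away from the diagonal. The diagonal $m=n$ term is estimated directly on an interval of length $\tau$ using the boundedness of $\Delta u_t$ from (A1).

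The hardest step is controlling the near-diagonal interaction in $E_2^n$ between the singular kernel $\rho_\alpha$ and the possibly unbounded second time derivative $\Delta u_{tt}$: pointwise in $t$, the quadrature error may be only of order $\tau^{1+\alpha}$, and recovering the full $O(\tau^2)$ bound requires combining the averaging $\frac{1}{\tau}\int_{t_{n-1}}^{t_n}dt$ with a careful Abel-type summation that trades kernel derivatives for integrable kernel differences. This is precisely the second-order product integration analysis developed in \cite{mclean2007second} on which the paper relies.
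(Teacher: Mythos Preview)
Your proposal goes well beyond what the paper itself provides: the paper's ``proof'' of Lemma~\ref{lemma2.4} consists solely of a citation to the case $\gamma=1$ in \cite{mclean2007second} (and to Lemma~2.2 in \cite{wang2022crank}), with no argument written out. Your decomposition $(R2)^n=E_1^n+E_2^n$ and the handling of $E_1^n$ via (A1) and kernel integrability are correct, and you rightly identify that the delicate near-diagonal part of $E_2^n$---where the pointwise quadrature error is only $O(\tau^{1+\alpha})$ and one must exploit the time averaging $\tfrac{1}{\tau}\int_{t_{n-1}}^{t_n}dt$ together with an Abel-type summation to recover full second order---is precisely the content of the McLean--Mustapha analysis.

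One small imprecision worth noting: your claim ``$t-s\ge\tau$ for $m<n$'' fails when $m=n-1$, since for $t\in[t_{n-1},t_n]$ and $s\in[t_{n-2},t_{n-1}]$ the difference $t-s$ can be arbitrarily small. Thus the $m=n-1$ term also lies in the near-diagonal regime and cannot be handled by Taylor-expanding $\rho_\alpha$; it requires the same careful treatment you allot to $m=n$. Since you explicitly defer that hard step to \cite{mclean2007second}, which is exactly what the paper does, your proposal is entirely compatible with the paper's proof---you have simply unpacked more of the structure than the authors chose to.
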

   \begin{proof} See the case $(\gamma=1)$ in \cite{mclean2007second}, or Lemma $2.2$ in \cite{wang2022crank}.
   \end{proof}

    \vskip 0.2mm
   \begin{lemma}\label{lemma2.5}\cite{chen2015alternating} For any grid function $v,w\in Z_{h}$, then it holds as follows
   \begin{equation*}
   \begin{split}
      -h_1h_2\sum_{i=1}^{M_x-1}\sum_{j=1}^{M_y-1}(\delta_{x}^{2}v_{ij})w_{ij}= h_1h_2\sum_{i=0}^{M_x-1}\sum_{j=1}^{M_y-1}(\delta_{x}v_{i+\frac{1}{2},j}) (\delta_{x}w_{i+\frac{1}{2},j}),\\
     -h_1h_2\sum_{i=1}^{M_x-1}\sum_{j=1}^{M_y-1}(\delta_{y}^{2}v_{ij})w_{ij}= h_1h_2\sum_{i=1}^{M_x-1}\sum_{j=0}^{M_y-1}(\delta_{y}v_{i,j+\frac{1}{2}}) (\delta_{y}w_{i,j+\frac{1}{2}}).
   \end{split}
   \end{equation*}
   \end{lemma}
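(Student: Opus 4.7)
The plan is to prove both identities by discrete summation by parts (Abel's transformation) applied in one direction at a time, leveraging the homogeneous Dirichlet data $v_{0,j}=v_{M_x,j}=w_{0,j}=w_{M_x,j}=0$ and $v_{i,0}=v_{i,M_y}=w_{i,0}=w_{i,M_y}=0$ implicit in restricting the sums to the interior nodes of $\Omega_h$.

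For the first identity I would fix $j$ and treat the inner $i$-sum. Writing $\delta_x^2 v_{ij} = \frac{1}{h_1}(\delta_x v_{i+1/2,j}-\delta_x v_{i-1/2,j})$ and setting $p_i := \delta_x v_{i+1/2,j}$, the inner sum reads $\sum_{i=1}^{M_x-1}(p_i - p_{i-1})w_{ij}$. Shifting the index $i \mapsto i+1$ in the $p_{i-1}$ term produces two boundary contributions $p_{M_x-1}w_{M_x-1,j}$ and $-p_0 w_{1,j}$ together with a middle sum $-\sum_{i=1}^{M_x-2}p_i(w_{i+1,j}-w_{ij})$. Using $w_{0,j}=w_{M_x,j}=0$, the two boundary terms can be rewritten as $-h_1 p_0\,\delta_x w_{1/2,j}$ and $-h_1 p_{M_x-1}\,\delta_x w_{M_x-1/2,j}$, which merge with the middle sum into $-h_1\sum_{i=0}^{M_x-1}\delta_x v_{i+1/2,j}\,\delta_x w_{i+1/2,j}$. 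Multiplying by $-h_2$ and summing over $j=1,\dots,M_y-1$ yields the first claimed equality.

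The second identity follows by an entirely symmetric argument with the roles of $x$ and $y$ swapped: fix $i$, sum by parts in $j$ using $q_j := \delta_y v_{i,j+1/2}$ together with the boundary values $w_{i,0}=w_{i,M_y}=0$, then multiply by $-h_1$ and sum over $i=1,\dots,M_x-1$. The only real obstacle here is careful index bookkeeping and correct absorption of the two boundary contributions on each row (respectively, column); no analytic content beyond Abel's summation formula is required.
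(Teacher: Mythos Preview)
Your argument is correct: the summation-by-parts computation is carried out accurately, the boundary terms are absorbed correctly using $w_{0,j}=w_{M_x,j}=0$, and the final factor of $-h_2$ (together with the $1/h_1$ already present in $\delta_x^2$) gives exactly the stated identity. The paper does not supply its own proof of this lemma but simply cites it from \cite{chen2015alternating}; your Abel-summation approach is the standard one and matches what one finds in that reference.
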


   \vskip 0.2mm
   \begin{lemma}\label{lemma2.6}\cite{mclean2007second,qiao2021second} For any grid function $v^{n}(1\leq n \leq \mathcal{N})$, it holds that
   \begin{equation}\label{eq2.19}
   \begin{array}{ll}
      \big(\nabla_h v^{1},\mathfrak{L}_{2,\tau}^{1}(\nabla_h v^{n})\big)+\sum\limits^{\mathcal{N}}_{n=2} \big(\nabla_h v^{n-\frac{1}{2}},\mathfrak{L}_{2,\tau}^{n}(\nabla_h v^{n})\big)\geq 0,
   \end{array}
   \end{equation}
   where $\mathfrak{L}_{2,\tau}^{n}$ is presented via (\ref{eq2.4}) and the operator $\nabla_{h}=\delta_x+\delta_y$.
   \end{lemma}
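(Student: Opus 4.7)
The plan is to recognize the quantity in \eqref{eq2.19} as the exact value of a continuous quadratic form associated with the Riemann--Liouville kernel $\rho_\alpha$, and then invoke the positive semidefiniteness of that form.

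First I would unify notation by setting $\psi^1:=\nabla_h v^1$ and $\psi^m:=\nabla_h v^{m-1/2}$ for $2\le m\le\mathcal{N}$. With this choice, the definition \eqref{eq2.4} of $\mathfrak{L}_{2,\tau}^n$ gives
\[
(\psi^n,\mathfrak{L}_{2,\tau}^n(\nabla_h v^n))=\sum_{m=1}^{n}\mathfrak{w}_{n,m}(\psi^n,\psi^m),\qquad 1\le n\le \mathcal{N},
\]
so the left-hand side of \eqref{eq2.19} equals
\[
S:=\sum_{n=1}^{\mathcal{N}}\sum_{m=1}^{n}\mathfrak{w}_{n,m}(\psi^n,\psi^m).
\]

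Second, I would introduce the piecewise-constant, grid-function-valued temporal function $\phi$ on $(0,T]$ defined by $\phi(t)=\psi^n$ for $t\in(t_{n-1},t_n]$, and compute
\[
J:=\int_0^{T}\biggl(\phi(t),\int_0^{t}\rho_\alpha(t-s)\phi(s)\,ds\biggr)dt
\]
by splitting the outer integral over the subintervals $(t_{n-1},t_n]$ and the inner integral over $(t_{m-1},t_m]$ for $m<n$ plus $(t_{n-1},t]$ for the diagonal piece. On each piece, $\phi(t)$ and $\phi(s)$ are constants that pull out of the integrals, and direct comparison with \eqref{eq2.5}--\eqref{eq2.7} identifies precisely the weights $\tau\mathfrak{w}_{n,m}$. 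This yields the identity $J=\tau S$.

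Finally, I would invoke the positivity of the Riemann--Liouville fractional integral: for $\alpha\in(0,1)$ and any scalar $\phi\in L^{2}(0,T)$,
\[
\int_0^{T}\phi(t)\int_0^{t}\rho_\alpha(t-s)\phi(s)\,ds\,dt\ge 0.
\]
Applied componentwise at each spatial node and summed with the positive weight $h_1h_2$, this yields $J\ge 0$, hence $S\ge 0$, which is exactly \eqref{eq2.19}. The only genuine difficulty is this last continuous positivity statement; I would either quote it from \cite{mclean2007second,qiao2021second}, or prove it by extending $\phi$ by zero to $\mathbb{R}$ and applying Plancherel together with the bound $\operatorname{Re}\bigl((i\omega)^{-\alpha}\bigr)=|\omega|^{-\alpha}\cos(\pi\alpha/2)\ge 0$ valid for $\alpha\in(0,1)$.
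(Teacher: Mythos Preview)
Your argument is correct: the substitution $\psi^{1}=\nabla_{h}v^{1}$, $\psi^{m}=\nabla_{h}v^{m-1/2}$ for $m\ge2$ collapses the left side of \eqref{eq2.19} to $\sum_{n=1}^{\mathcal N}\sum_{m=1}^{n}\mathfrak w_{n,m}(\psi^{n},\psi^{m})$, and the piecewise-constant interpolant identifies this with $\tau^{-1}\int_{0}^{T}(\phi(t),\int_{0}^{t}\rho_{\alpha}(t-s)\phi(s)\,ds)\,dt$, which is nonnegative by the positive-definiteness of the Riemann--Liouville kernel. The paper does not supply its own proof of Lemma~\ref{lemma2.6}; it simply cites \cite{mclean2007second,qiao2021second}, and the argument you outline is exactly the one those references use, so you are in full agreement with the intended approach.
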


   \vskip 0.2mm
   \begin{lemma}\label{lemma2.7} \cite{qiao2021second} For $\mathcal{N}\geq 1$ and $v^{n}\in Z_{h}$, we have
   \begin{equation}\label{eq2.20}
   \begin{array}{ll}
        \tau\left(v^{1},\delta_{t}v^1\right)+ \tau\sum\limits^{\mathcal{N}}_{n=2}\left(v^{n-\frac{1}{2}},\delta_{t}v^{n}\right)\geq \frac{1}{2}\left(\left\|v^{\mathcal{N}}\right\|^{2}- \left\|v^{0}\right\|^{2}\right).
   \end{array}
   \end{equation}
   \end{lemma}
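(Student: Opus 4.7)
The plan is to separate the $n=1$ term from the $n\ge 2$ sum, since each is treated differently: the midpoint factor $v^{n-1/2}$ for $n\ge 2$ admits a clean telescoping identity, while the $n=1$ term involves $v^1$ itself (not $v^{1/2}$) and so requires an inequality step.

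First I would handle the bulk sum. Using $\tau\delta_t v^n = v^n - v^{n-1}$ and the standard polarization identity
\[
\bigl(v^{n-\frac{1}{2}},\,v^n-v^{n-1}\bigr)=\tfrac{1}{2}\bigl(v^n+v^{n-1},\,v^n-v^{n-1}\bigr)=\tfrac{1}{2}\bigl(\|v^n\|^2-\|v^{n-1}\|^2\bigr),
\]
each summand for $n\ge 2$ becomes an exact telescoping difference, so that
\[
\tau\sum_{n=2}^{\mathcal{N}}\bigl(v^{n-\frac{1}{2}},\delta_t v^n\bigr)=\tfrac{1}{2}\bigl(\|v^{\mathcal{N}}\|^2-\|v^1\|^2\bigr).
\]

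Next I would treat the $n=1$ term. Since
\[
\tau(v^1,\delta_t v^1)=(v^1,v^1-v^0)=\|v^1\|^2-(v^1,v^0),
\]
applying the Cauchy--Schwarz inequality in the form $2(v^1,v^0)\le \|v^1\|^2+\|v^0\|^2$ gives
\[
\tau(v^1,\delta_t v^1)\ge \|v^1\|^2-\tfrac{1}{2}\|v^1\|^2-\tfrac{1}{2}\|v^0\|^2=\tfrac{1}{2}\bigl(\|v^1\|^2-\|v^0\|^2\bigr).
\]
Adding this bound to the telescoped identity for $n\ge 2$, the $\pm\tfrac{1}{2}\|v^1\|^2$ terms cancel and the desired inequality \eqref{eq2.20} follows.

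There is essentially no serious obstacle here: the only non-identity step is the elementary Cauchy--Schwarz bound at $n=1$, which is needed precisely because the first-step approximation uses $v^1$ instead of $v^{1/2}$ (reflecting the first-step start-up scheme in the quadrature operators $\mathfrak{L}_{1,\tau}$ and $\mathfrak{L}_{2,\tau}$). I would just need to make sure the index conventions, the definition of $\delta_t$, and the inner product $(\cdot,\cdot)$ from Section~\ref{sec2} line up, but no additional machinery is required.
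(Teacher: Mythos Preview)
Your proof is correct and is exactly the standard argument: telescoping via the polarization identity for $n\ge 2$, plus a Cauchy--Schwarz/Young step at $n=1$ to absorb the mismatch between $v^1$ and $v^{1/2}$. The paper does not supply its own proof of this lemma (it simply cites \cite{qiao2021second}), so there is nothing substantive to compare against; your argument is the natural one and would be accepted as-is.
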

  \vskip 0.2mm
\begin{lemma}\label{lemma2.8}{\cite{sloan1986time}} (Discrete Gr\"{o}nwall's inequality) If $\{\mathcal{Q}_m\}$ is a non-negative real sequence and satisfies
     \begin{equation*}
     	 \begin{array}{ll}
          \mathcal{Q}_m \leq \tilde{\gamma}_m + \sum\limits_{n=0}^{m-1}\tilde{\beta}_n \, \mathcal{Q}_n,  \qquad m \geq 1,
     \end{array}
     \end{equation*}
where $\{\tilde{\gamma}_m\}$ is a non-negative and non-descending sequence, $\tilde{\beta}_n \geq 0$, then, we obtain
     \begin{equation*}
     	 \begin{array}{ll}
          \mathcal{Q}_m \leq \tilde{\gamma}_m \, \exp(\sum\limits_{n=0}^{m-1}\tilde{\beta}_n ),  \qquad m \geq 1.
     \end{array}
     \end{equation*}
\end{lemma}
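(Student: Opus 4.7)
The plan is to proceed by strong induction on $m$, combining the elementary inequality $1 + x \leq \exp(x)$ for $x \geq 0$ with a telescoping argument. First I would interpret the hypothesis at $m = 0$ via the empty-sum convention to give $\mathcal{Q}_0 \leq \tilde{\gamma}_0$, which supplies the needed control at the initial index. For the base case $m = 1$, the recursion reads $\mathcal{Q}_1 \leq \tilde{\gamma}_1 + \tilde{\beta}_0 \mathcal{Q}_0$; invoking the non-descending property of $\{\tilde{\gamma}_m\}$ and then $1 + \tilde{\beta}_0 \leq \exp(\tilde{\beta}_0)$ yields $\mathcal{Q}_1 \leq \tilde{\gamma}_1(1 + \tilde{\beta}_0) \leq \tilde{\gamma}_1 \exp(\tilde{\beta}_0)$, matching the claim.

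For the inductive step, assume the conclusion for every index below $m$. Substituting these bounds into the recursion and using monotonicity of $\{\tilde{\gamma}_m\}$ to replace each $\tilde{\gamma}_n$ by $\tilde{\gamma}_m$ gives
\begin{equation*}
\mathcal{Q}_m \leq \tilde{\gamma}_m \Bigl[\, 1 + \sum_{n=0}^{m-1} \tilde{\beta}_n \exp\Bigl(\sum_{k=0}^{n-1}\tilde{\beta}_k\Bigr)\Bigr].
\end{equation*}
It then suffices to bound the bracketed quantity by $\exp\bigl(\sum_{k=0}^{m-1}\tilde{\beta}_k\bigr)$. Introducing the partial sums $S_n := \exp\bigl(\sum_{k=0}^{n-1}\tilde{\beta}_k\bigr)$ with $S_0 = 1$, the inequality $1 + \tilde{\beta}_n \leq \exp(\tilde{\beta}_n)$ produces $S_n(1 + \tilde{\beta}_n) \leq S_{n+1}$, i.e.\ $\tilde{\beta}_n S_n \leq S_{n+1} - S_n$. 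Summing from $n = 0$ to $m-1$ telescopes to $\sum_{n=0}^{m-1} \tilde{\beta}_n S_n \leq S_m - 1$, which is exactly the required estimate and closes the induction.

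The argument is largely mechanical, so I do not anticipate a substantive obstacle. The two points requiring care are the use of the non-descending property of $\{\tilde{\gamma}_m\}$ to pull the common factor $\tilde{\gamma}_m$ outside the summation, and the identification of the correct partial sums $S_n$ so that the right-hand side collapses telescopically. The hypothesis $\tilde{\beta}_n \geq 0$ guarantees that all inequalities point in the same direction, so no additional sign analysis is needed.
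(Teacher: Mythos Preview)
Your argument is correct. The paper does not supply its own proof of this lemma; it merely states the result and cites the reference \cite{sloan1986time}, so there is nothing to compare against. The strong-induction strategy you outline, combined with the telescoping estimate $\tilde{\beta}_n S_n \leq S_{n+1} - S_n$ via $1 + x \leq e^x$, is the standard route and goes through cleanly.

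One small remark: you are right that the hypothesis must be read at $m = 0$ through the empty-sum convention to obtain $\mathcal{Q}_0 \leq \tilde{\gamma}_0$; otherwise the $n = 0$ term in the recursion for $m \geq 1$ would be uncontrolled and the stated conclusion would fail. Since the authors only use the lemma in contexts where the initial term is either zero or explicitly bounded, this interpretation is clearly the intended one, and your handling of it is appropriate.
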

   \section{Establishment of the two-grid difference scheme}\label{secc3}
  In the following, we first establish the SCN finite difference method for nonlinear problem \eqref{eq1.1}-\eqref{eq1.3}.
  \vskip 0.2mm
    Applying the quadrature approximations \eqref{eq2.2}-\eqref{eq2.3} and Lemmas \ref{lemma2.1}-\ref{lemma2.2}, then \eqref{eq2.1} become
     \begin{equation}\label{eq3.1}
     \begin{split}
        \delta_{t}u^{1}_{ij}&-\mu\Delta_{h} u^{1}_{ij}- \mathfrak{w}_{1,1}\Delta_{h} u^{1}_{ij} =b^{1}_{ij}+\frac{g(u^1_{ij})+g(u^{0}_{ij})}{2}\\&+(R1)^{1}_{ij} +(R2)^{1}_{ij}+(R3)^{1}_{ij}+(R4)^{1}_{ij},\qquad (x_i,y_j)\in\Omega_{h},
      \end{split}
   \end{equation}
   \begin{equation}\label{eq3.2}
   \begin{split}
      \delta_{t}u^{n}_{ij}&-\mu\Delta_{h} u^{n-\frac{1}{2}}_{ij}- \mathfrak{w}_{n,1}\Delta_{h} u^{1}_{ij}-\sum_{m=2}^{n}\mathfrak{w}_{n,m}\Delta_{h} u^{m-\frac{1}{2}}_{ij}=b^{n}_{ij}+\frac{g(u^n_{ij})+g(u^{n-1}_{ij})}{2}
        \\&+(R1)^{n}_{ij} +(R2)^{n}_{ij}+(R3)^{n}_{ij}+(R4)^{n}_{ij},\qquad(x_i,y_j)\in\Omega_{h},\quad 2\leq n\leq \mathcal{N},
   \end{split}
   \end{equation}
   \begin{equation}\label{eq3.3}
     u_{ij}^{n}=0,\qquad (x_i,y_j)\in\partial\Omega_{h},\qquad 1\leq n\leq \mathcal{N},
   \end{equation}
   \begin{equation}\label{eq3.4}
     u_{ij}^{0}=\psi(x_i,y_j),\qquad (x_i.y_j)\in\Omega_h,
   \end{equation}
   where
   \begin{equation*}
     \begin{split}
        &b^{n}_{ij}=\frac{1}{\tau}\int\limits^{t_{n}}_{t_{n-1}}f(x_i,y_j,t)dt,\\
        &(R3)^{n}_{ij}= \frac{1}{\tau}\int^{t_n}_{t_{n-1}}g(u(x_i,y_j,t))dt- \frac{g(u^n_{ij})+g(u^{n-1}_{ij})}{2}=\mathcal{O}(\tau^2),
     \end{split}
   \end{equation*}
   \begin{equation*}
     \begin{split}
        (R4)^{n}_{ij}&=\mathfrak{L}_{1,\tau}^{n}(\Delta u_{ij}^{n}-\Delta_{h}u_{ij}^{n})+\mathfrak{L}_{2,\tau}^{n}(\Delta u_{ij}^{n}-\Delta_{h}u_{ij}^{n})=\mathcal{O}(h_1^2+h_2^2).
     \end{split}
   \end{equation*}
   \vskip 0.2mm
   Omitting the truncation errors $(Rs)_{ij}^{n}(s=1,2,3,4)$, $1\leq n\leq \mathcal{N}$, and replacing $u_{ij}^n$ with $U_{ij}^{n}$, we obtain the following SCN finite difference scheme
   \begin{equation}\label{eq3.5}
        \delta_{t}U^{1}_{ij}-\mu\Delta_{h} U^{1}_{ij}- \mathfrak{w}_{1,1}\Delta_{h} U^{1}_{ij} =b^{1}_{ij}+\frac{g(U^1_{ij})+g(U^{0}_{ij})}{2},\qquad (x_i,y_j)\in\Omega_{h},
   \end{equation}
   \begin{equation}\label{eq3.6}
   \begin{split}
      \delta_{t}U^{n}_{ij}-\mu\Delta_{h} U^{n-\frac{1}{2}}_{ij}- \mathfrak{w}_{n,1}\Delta_{h} U^{1}_{ij}-\sum_{m=2}^{n}\mathfrak{w}_{n,m}\Delta_{h} U^{m-\frac{1}{2}}_{ij} =b^{n}_{ij}+\frac{g(U^n_{ij})+g(U^{n-1}_{ij})}{2},\\
        \qquad(x_i,y_j)\in\Omega_{h},\qquad 2\leq n\leq \mathcal{N},
   \end{split}
   \end{equation}
   \begin{equation}\label{eq3.7}
     U_{ij}^{n}=0,\qquad (x_i,y_j)\in\partial\Omega_{h},\qquad 1\leq n\leq \mathcal{N},
   \end{equation}
   \begin{equation}\label{eq3.8}
     U_{ij}^{0}=\psi(x_i,y_j),\qquad (x_i,y_j)\in\Omega_h.
   \end{equation}
   \vskip 0.2mm
   In order to solve \eqref{eq3.5}-\eqref{eq3.8} efficiently, we develop the following TTGCN finite difference method, which is divided into three steps.
   \vskip 0.2mm
\begin{itemize}
    \item   [\textbf{Step I.}]
 On the coarse grid, we only calculate $ks$-th level, $0\leq s\leq N$. Similar to the establishment of equations (\ref{eq3.5})-(\ref{eq3.6}), the discrete scheme on the coarse grid is constructed as follows
   \begin{equation}\label{eq3.9}
     \delta_{t}(U_C)^{k}_{ij}-\mu\Delta_{h} (U_C)^{k}_{ij}- \mathfrak{w}_{1,1}\Delta_{h} (U_C)^{k}_{ij} =b^{k}_{ij}+\frac{g((U_C)^k_{ij})+g((U_C)^{0}_{ij})}{2},\quad (x_i,y_j)\in\Omega_{h},
   \end{equation}
   \begin{equation}\label{eq3.10}
   \begin{split}
      \delta_{t}(U_C)^{sk}_{ij}&-\mu\Delta_{h} (U_C)^{(s-\frac{1}{2})k}_{ij}- \mathfrak{w}_{s,1}\Delta_{h} (U_C)^{k}_{ij}-\sum_{p=2}^{s}\mathfrak{w}_{s,p}\Delta_{h} (U_C)^{(p-\frac{1}{2})k}_{ij} \\&=b^{sk}_{ij}+\frac{g((U_C)^{sk}_{ij})+g((U_C)^{(s-1)k}_{ij})}{2},
        \qquad(x_i,y_j)\in\Omega_{h},\qquad 2\leq s\leq N.
   \end{split}
   \end{equation}
 \item  [\textbf{Step II.}] Then, based on the solution $(U_C)^{sk}_{ij}$ obtained in the Step I, applying Lagrange linear interpolation to calculate $(U_C)^{(s-1)k+q}_{ij}$ by points $(t_{(s-1)k},(U_C)^{(s-1)k}_{ij})$ and $(t_{sk},(U_C)^{sk}_{ij})$ direction on the coarse grid, with $1\leq q\leq k-1$, we have
   \begin{equation}\label{eq3.11}
     \begin{split}
        &\mathcal{L}_{U_C}(t_{(s-1)k+q})=U_{C}^{(s-1)k+q} \\
          & = \frac{t_{(s-1)k+q} - t_{sk}}{t_{(s-1)k} - t_{sk}} U_{C}^{(s-1)k}
        + \frac{t_{(s-1)k+q} - t_{(s-1)k}}{t_{sk} - t_{(s-1)k}} U_{C}^{sk} \\
          & = (1 - \frac{q}{k}) U_{C}^{(s-1)k} + \frac{q}{k} U_{C}^{sk},
        \quad 1\leq s \leq N, \quad 1\leq q \leq k-1.
     \end{split}
   \end{equation}
  \item [\textbf{Step III.}] Finally, according to $(U_C)^n_{ij}$ obtained in the Step II, the linear Crank-Nicolson finite difference scheme on a time fine grid is obtained by
   \begin{equation}\label{eq3.12}
   \begin{split}
      &\delta_{t}(U_F)^{1}_{ij}-\mu\Delta_{h} (U_F)^{1}_{ij}- \mathfrak{w}_{1,1}\Delta_{h} (U_F)^{1}_{ij}  \\
      &=b^{1}_{ij}+\frac{1}{2}g((U_F)^{0}_{ij})+ \frac{1}{2}\Big[g((U_C)^1_{ij})+g'((U_C)^1_{ij})\left((U_F)_{ij}^1-(U_C)_{ij}^1\right)\Big],\\ &(x_i,y_j)\in\Omega_{h},
   \end{split}
   \end{equation}
   \begin{equation}\label{eq3.13}
   \begin{split}
      &\delta_{t}(U_F)^{n}_{ij}-\mu\Delta_{h} (U_F)^{n-\frac{1}{2}}_{ij}- \mathfrak{w}_{n,1}\Delta_{h} (U_F)^{1}_{ij}-\sum_{p=2}^{n}\mathfrak{w}_{n,p}\Delta_{h} (U_F)^{p-\frac{1}{2}}_{ij} \\
      &=b^{n}_{ij}+\frac{1}{2}g((U_F)^{n-1}_{ij})+ \frac{1}{2}\Big[g((U_C)^n_{ij})+g'((U_C)^n_{ij}) \left((U_F)_{ij}^n-(U_C)_{ij}^n\right)\Big],\\
        &\qquad(x_i,y_j)\in\Omega_{h},\qquad 2\leq n\leq \mathcal{N}.
   \end{split}
   \end{equation}
   \end{itemize}
  \section{Analysis of the two-grid difference scheme}\label{secc4}
    \vskip 0.2mm
    Next, based on the TTGCN finite difference scheme \eqref{eq3.9}-\eqref{eq3.13}, we will analyze the stability and convergence of the scheme under the regularity assumption \textbf{(A1)} and \textbf{(A2)}.
    \vskip 0.2mm
    \subsection{Stability}
    We use the energy method to establish the stability of the TTGCN finite difference scheme. First, consider the case on the coarse grid.
    \begin{theorem}\label{th4.1}
      The fully discrete scheme \eqref{eq3.9}-\eqref{eq3.11} on the coarse grid is stable.
    \end{theorem}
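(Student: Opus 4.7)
The plan is to apply the discrete energy method. First, I would take the discrete $L^2$ inner product of \eqref{eq3.9} with $(U_C)^k$ and of \eqref{eq3.10} with $(U_C)^{(s-\frac{1}{2})k}$ for $2\leq s\leq s^{*}$, multiply through by $\tau_C$, and sum over $s$ from $1$ to a fixed but arbitrary $s^{*}\leq N$. By Lemma \ref{lemma2.7} the temporal term $\tau_C\bigl((U_C)^k,\delta_t(U_C)^k\bigr)+\tau_C\sum_{s=2}^{s^{*}}\bigl((U_C)^{(s-\frac{1}{2})k},\delta_t(U_C)^{sk}\bigr)$ is bounded below by $\tfrac{1}{2}\bigl(\|(U_C)^{s^{*}k}\|^2-\|(U_C)^0\|^2\bigr)$, which produces the energy we want to control.

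For the elliptic contribution, summation by parts (Lemma \ref{lemma2.5}) turns $-\mu\tau_C\bigl(\Delta_h(U_C)^k,(U_C)^k\bigr)-\mu\tau_C\sum_{s=2}^{s^{*}}\bigl(\Delta_h(U_C)^{(s-\frac{1}{2})k},(U_C)^{(s-\frac{1}{2})k}\bigr)$ into $\mu\tau_C\|\nabla_h(U_C)^k\|^2+\mu\tau_C\sum_{s=2}^{s^{*}}\|\nabla_h(U_C)^{(s-\frac{1}{2})k}\|^2\geq 0$, so this block may simply be discarded. Applying the same summation by parts to the product-integration terms casts them in precisely the form of the left-hand side of Lemma \ref{lemma2.6}, which guarantees that the entire quadratic form in the gradient history is nonnegative; it can likewise be dropped. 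Thus the left-hand side reduces to $\tfrac{1}{2}\|(U_C)^{s^{*}k}\|^2-\tfrac{1}{2}\|(U_C)^0\|^2$ plus nonnegative terms.

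On the right-hand side, the source $b^{sk}$ is controlled by Cauchy--Schwarz and Young's inequality, while for the averaged nonlinearity I would invoke the Lipschitz bound on $g$ to obtain $\|g((U_C)^{sk})\|^2\leq \bar{C}\bigl(1+\|(U_C)^{sk}\|^2\bigr)$. After rearranging, one arrives at an inequality of the form $\|(U_C)^{s^{*}k}\|^2\leq \bar{C}\bigl(\|(U_C)^0\|^2+\tau_C\!\sum_{s=1}^{s^{*}}\|b^{sk}\|^2+1\bigr)+\bar{C}\tau_C\!\sum_{s=0}^{s^{*}}\|(U_C)^{sk}\|^2$; the discrete Grönwall inequality (Lemma \ref{lemma2.8}) then delivers a bound on $\|(U_C)^{s^{*}k}\|$ uniform in $s^{*}\leq N$. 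Since \eqref{eq3.11} writes $(U_C)^{(s-1)k+q}$ as a convex combination of $(U_C)^{(s-1)k}$ and $(U_C)^{sk}$, the triangle inequality immediately lifts the estimate to every intermediate node.

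The main obstacle I anticipate is the implicit appearance of $g((U_C)^{s^{*}k})$ at the top time level on the right-hand side: its contribution generates an extra $\bar{C}^2\tau_C\|(U_C)^{s^{*}k}\|^2$ that must be absorbed back into the left-hand side before Grönwall is invoked. Tracking the constants in the Young/Lipschitz splitting carefully (and imposing a mild restriction on $\tau_C$) is what makes this absorption clean, and handling the asymmetry between the first step \eqref{eq3.9} and the general step \eqref{eq3.10} when pairing with the appropriate test function is where one must be most attentive.
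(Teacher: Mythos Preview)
Your plan is sound and mirrors the paper's energy argument closely: the same test functions, the same use of Lemmas \ref{lemma2.5}--\ref{lemma2.7} to discard the elliptic and memory blocks, the same Gr\"onwall closure with a smallness restriction on $\tau_C$ to absorb the implicit top-level nonlinear term, and the same convex-combination argument for the interpolated nodes in \eqref{eq3.11}. The one notable divergence is the quantity being tracked. The paper reads ``stable'' as continuous dependence on initial data: it introduces a perturbed solution $(\tilde U_C)^{sk}$, sets $\varepsilon_C^{sk}=(U_C)^{sk}-(\tilde U_C)^{sk}$, and proves $\|\varepsilon_C^{Nk}\|\leq \bar C\,\|\varepsilon_C^{0}\|$; because the right-hand side then involves a genuine difference $g(U_C)-g(\tilde U_C)$, the Lipschitz hypothesis applies directly without the ``$+1$'' you needed, and the source term $b^{sk}$ cancels entirely. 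Your route instead delivers the a priori bound $\|(U_C)^{sk}\|\leq \bar C$, which is precisely the statement the paper invokes (somewhat loosely) at the opening of its Part (II). A second, purely tactical difference: the paper linearizes the resulting inequality via the $\max$ trick---choosing $\bar m$ with $\|\varepsilon_C^{\bar m k}\|=\max_{s}\|\varepsilon_C^{sk}\|$ and dividing through to work in $\|\cdot\|$---whereas you stay at the $\|\cdot\|^2$ level with Young's inequality; either device handles the obstacle you correctly anticipated.
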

    \begin{proof}
      Let $(\tilde{U}_C)_{ij}^{sk}$ be the approximation solution of \eqref{eq3.9}-\eqref{eq3.10}. Thus, we get
      \begin{equation}\label{eq4.1}
\begin{split}
        \delta_{t}(\tilde{U}_C)^{k}_{ij}-\mu\Delta_{h} (\tilde{U}_C)^{k}_{ij}&- \mathfrak{w}_{1,1}\Delta_{h} (\tilde{U}_C)^{k}_{ij}  =b^{k}_{ij}+\frac{g((\tilde{U}_C)^k_{ij})+g((\tilde{U}_C)^{0}_{ij})}{2},\\
   &(x_i,y_j)\in\Omega_{h},
\end{split}
   \end{equation}
   \begin{equation}\label{eq4.2}
   \begin{split}
   &   \delta_{t}(\tilde{U}_C)^{sk}_{ij}-\mu\Delta_{h} (\tilde{U}_C)^{(s-\frac{1}{2})k}_{ij}- \mathfrak{w}_{s,1}\Delta_{h} (\tilde{U}_C)^{k}_{ij}-\sum_{p=2}^{s}\mathfrak{w}_{s,p}\Delta_{h} (\tilde{U}_C)^{(p-\frac{1}{2})k}_{ij} \\&=b^{sk}_{ij}+\frac{g((\tilde{U}_C)^{sk}_{ij})+g((\tilde{U}_C)^{(s-1)k}_{ij})}{2},
        \quad(x_i,y_j)\in\Omega_{h},\quad 2\leq s\leq N.
   \end{split}
   \end{equation}
   Subtracting \eqref{eq4.1}-\eqref{eq4.2} from \eqref{eq3.9}-\eqref{eq3.10} and defining $\varepsilon_C=(U_C)^{sk}_{ij}-(\tilde U_C)^{sk}_{ij}$, we get
    \begin{equation}\label{eq4.3}
     \begin{split}
     &\delta_{t}(\varepsilon_C)^{k}_{ij}-\mu\Delta_{h} (\varepsilon_C)^{k}_{ij}- \mathfrak{w}_{1,1}\Delta_{h} (\varepsilon_C)^{k}_{ij} =\frac{1}{2}\left[g((U_C)^k_{ij})-g((\tilde{U}_C)^k_{ij})\right]\\ &+\frac{1}{2}\left[g((U_C)^0_{ij})-g((\tilde{U}_C)^0_{ij})\right] ,\quad (x_i,y_j)\in\Omega_{h},
     \end{split}
   \end{equation}
   \begin{equation}\label{eq4.4}
   \begin{split}
      &\delta_{t}(\varepsilon_C)^{sk}_{ij}-\mu\Delta_{h} (\varepsilon_C)^{(s-\frac{1}{2})k}_{ij}- \mathfrak{w}_{s,1}\Delta_{h} (\varepsilon_C)^{k}_{ij}-\sum_{p=2}^{s}\mathfrak{w}_{s,p}\Delta_{h} (\varepsilon_C)^{(p-\frac{1}{2})k}_{ij} \\ &=\frac{1}{2}\left[g((U_C)^{sk}_{ij})-g((\tilde{U}_C)^{sk}_{ij})\right] +\frac{1}{2}\left[g((U_C)^{(s-1)k}_{ij})-g((\tilde{U}_C)^{(s-1)k}_{ij})\right]
      ,\\
        &\qquad(x_i,y_j)\in\Omega_{h},\qquad 2\leq s\leq N.
   \end{split}
   \end{equation}
    \vskip 0.2mm
    We will prove this theorem in two steps as follows:
\begin{itemize}
    \item [(I)]
 Taking inner product of both sides of (\ref{eq4.3}) with $\varepsilon_C^k$ and multiplying it by $\tau_C$, we yield
    \begin{equation}\label{eq4.5}
     \begin{split}
     &\tau_C\left(\delta_{t}\varepsilon_C^{k},\varepsilon_C^k\right) -\tau_C\mu\left(\Delta_{h} \varepsilon_C^{k},\varepsilon_C^k\right)- \tau_C\mathfrak{w}_{1,1}\left(\Delta_{h} \varepsilon_C^{k},\varepsilon_C^k\right) \\ &=\frac{\tau_C}{2}\left(g(U_C^k)-g(\tilde{U}_C^k),\varepsilon_C^k\right) +\frac{\tau_C}{2}\left(g(U_C^0)-g(\tilde{U}_C^0),\varepsilon_C^k\right) ,\quad (x_i,y_j)\in\Omega_{h}.
     \end{split}
   \end{equation}
   \vskip 0.2mm
   For \eqref{eq4.4}, taking the inner product of both sides with $\varepsilon_C^{(s-\frac{1}{2})k}$, multiplying it by $\tau_C$, and summing for $s$ from 2 to $N$, we obtain
   \begin{equation}\label{eq4.6}
   \begin{split}
      &\sum^{N}_{s=2}\tau_C\left(\delta_{t}\varepsilon_C^{sk} ,\varepsilon_C^{(s-\frac{1}{2})k}\right) -\sum^{N}_{s=2}\mu\tau_C\left(\Delta_{h} \varepsilon_C^{(s-\frac{1}{2})k}, \varepsilon_C^{(s-\frac{1}{2})k}\right) -\sum^{N}_{s=2}\mathfrak{w}_{s,1}\tau_C \left(\Delta_{h}\varepsilon_C^{k} ,\varepsilon_C^{(s-\frac{1}{2})k}\right) \\&-\sum^{N}_{s=2}\tau_C\sum_{p=2}^{s}\mathfrak{w}_{s,p}\left(\Delta_{h} \varepsilon_C^{(p-\frac{1}{2})k},\varepsilon_C^{(s-\frac{1}{2})k}\right) =\sum^{N}_{s=2}\frac{\tau_C}{2}\left(g(U_C^{sk})-g(\tilde{U}_C^{sk}) ,\varepsilon_C^{(s-\frac{1}{2})k}\right) \\ &+\sum^{N}_{s=2}\frac{\tau_C}{2}\left(g(U_C^{(s-1)k})-g(\tilde{U}_C^{(s-1)k}) ,\varepsilon_C^{(s-\frac{1}{2})k}\right)
      ,\qquad(x_i,y_j)\in\Omega_{h},\qquad 2\leq s\leq N.
   \end{split}
   \end{equation}
   \vskip 0.2mm
   Then adding the above two equations together gives
   \begin{equation}\label{eq4.7}
   \begin{split}
      &\mathcal{H}_1+\mathcal{H}_2+\mathcal{H}_3 =\frac{\tau_C}{2}\left(g(U_C^k)-g(\tilde{U}_C^k),\varepsilon_C^k\right) +\frac{\tau_C}{2}\left(g(U_C^0)-g(\tilde{U}_C^0),\varepsilon_C^k\right) \\&+\sum^{N}_{s=2}\frac{\tau_C}{2}\left(g(U_C^{sk})-g(\tilde{U}_C^{sk}) ,\varepsilon_C^{(s-\frac{1}{2})k}\right)
     +\sum^{N}_{s=2}\frac{\tau_C}{2}\left(g(U_C^{(s-1)k})-g(\tilde{U}_C^{(s-1)k}) ,\varepsilon_C^{(s-\frac{1}{2})k}\right),
   \end{split}
   \end{equation}
   where
   \begin{equation*}
      \mathcal{H}_1=\tau_C\left(\delta_{t}\varepsilon_C^{k},\varepsilon_C^k\right) +\sum^{N}_{s=2}\tau_C\left(\delta_{t}\varepsilon_C^{sk} ,\varepsilon_C^{(s-\frac{1}{2})k}\right),
      \end{equation*}
      \begin{equation*}
      \mathcal{H}_2=-\tau_C\mu\left(\Delta_{h} \varepsilon_C^{k},\varepsilon_C^k\right)-\sum^{N}_{s=2}\mu\tau_C\left(\Delta_{h} \varepsilon_C^{(s-\frac{1}{2})k}, \varepsilon_C^{(s-\frac{1}{2})k}\right),
      \end{equation*}
      \begin{equation*}
      \begin{split}
      \mathcal{H}_3 =&-\tau_C\mathfrak{w}_{1,1}\left(\Delta_{h} \varepsilon_C^{k},\varepsilon_C^k\right)-\sum^{N}_{s=2}\mathfrak{w}_{s,1}\tau_C \left(\Delta_{h}\varepsilon_C^{k} ,\varepsilon_C^{(s-\frac{1}{2})k}\right) \\
      &-\sum^{N}_{s=2}\tau_C\sum_{p=2}^{s}\mathfrak{w}_{s,p}\left(\Delta_{h} \varepsilon_C^{(p-\frac{1}{2})k},\varepsilon_C^{(s-\frac{1}{2})k}\right).
      \end{split}
      \end{equation*}
      \vskip 0.2mm
      Below the terms $\mathcal{H}_{q}(q=1,2,3)$ will be estimated one by one. First, for $\mathcal{H}_{1}$, we use Lemma \ref{lemma2.7} to obtain
      \begin{equation}\label{eq4.8}
        \mathcal{H}_1\geq \frac{1}{2}\left(\|\varepsilon_C^{Nk}\|^{2}-\|\varepsilon_C^{0}\|^{2}\right).
      \end{equation}
      \vskip 0.2mm
      Second, from Lemma \ref{lemma2.5}, we obtain
      \begin{equation}\label{eq4.9}
        \begin{split}
           \mathcal{H}_2&= \mu\tau_C\left(\nabla_{h} \varepsilon_C^{k},\nabla_{h}\varepsilon_C^k\right)+\sum^{N}_{s=2}\mu\tau_C\left(\nabla_{h} \varepsilon_C^{(s-\frac{1}{2})k}, \nabla_{h}\varepsilon_C^{(s-\frac{1}{2})k}\right) \\
             &=\mu\tau_C\|\nabla_{h}\varepsilon_C^{k}\|^{2}+ \sum^{N}_{s=2}\mu\tau_C\|\nabla_{h}\varepsilon_C^{(s-\frac{1}{2})k}\|^{2}\geq 0.
        \end{split}
      \end{equation}

      \vskip 0.2mm
      Finally, for the third term $\mathcal{H}_3$, we use Lemma \ref{lemma2.5} and Lemma \ref{lemma2.6} to get
      \begin{equation}\label{eq4.10}
        \begin{split}
           \mathcal{H}_3&=\tau_C\mathfrak{w}_{1,1}\left(\nabla_{h} \varepsilon_C^{k},\nabla_{h}\varepsilon_C^k\right)+\sum^{N}_{s=2}\mathfrak{w}_{s,1}\tau_C \left(\nabla_{h}\varepsilon_C^{k} ,\nabla_{h}\varepsilon_C^{(s-\frac{1}{2})k}\right) \\
      &+\sum^{N}_{s=2}\tau_C\sum_{p=2}^{s}\mathfrak{w}_{s,p}\left(\nabla_{h} \varepsilon_C^{(p-\frac{1}{2})k},\nabla_{h}\varepsilon_C^{(s-\frac{1}{2})k}\right) \\
             & =\tau_C\mathfrak{w}_{1,1}\left(\nabla_{h} \varepsilon_C^{k},\nabla_{h}\varepsilon_C^k\right)+\sum^{N}_{s=2}\tau_C \left(\mathfrak{w}_{s,1}\nabla_{h}\varepsilon_C^{k} +\sum_{p=2}^{s}\mathfrak{w}_{s,p}\nabla_{h} \varepsilon_C^{(p-\frac{1}{2})k},\nabla_{h}\varepsilon_C^{(s-\frac{1}{2})k}\right) \\
             & =\tau_C\mathfrak{w}_{1,1}\left(\nabla_{h} \varepsilon_C^{k},\nabla_{h}\varepsilon_C^k\right) +\sum^{N}_{s=2}\tau_C(\mathfrak{L}_{2,\tau}^{s}\nabla_{h}\varepsilon_C^{sk} ,\nabla_{h}\varepsilon_C^{(s-\frac{1}{2})k})\geq 0.
        \end{split}
      \end{equation}
      \vskip 0.2mm
      Next, $g(u)$ satisfies the Lipschitz condition. For (\ref{eq4.7}), using Cauchy-Schwarz inequality, we have
      \begin{equation}\label{eq4.11}
        \begin{split}
           &\|\varepsilon_C^{Nk}\|^{2}-\|\varepsilon_C^{0}\|^{2}\\ &\leq\tau_C\left\|g(U_C^k)-g(\tilde{U}_C^k)\right\| \left\|\varepsilon_C^k\right\| +\tau_C\left\|g(U_C^0)-g(\tilde{U}_C^0)\right\|\left\|\varepsilon_C^k\right\| \\&+\sum^{N}_{s=2}\tau_C\left\|g(U_C^{sk})-g(\tilde{U}_C^{sk}) \right\|\left\|\varepsilon_C^{(s-\frac{1}{2})k}\right\|
        +\sum^{N}_{s=2}\tau_C\left\|g(U_C^{(s-1)k})-g(\tilde{U}_C^{(s-1)k}) \right\|\left\|\varepsilon_C^{(s-\frac{1}{2})k}\right\|  \\
             &\leq \bar{C}\tau_C\left(\left\|\varepsilon_C^k\right\|^2+ \left\|\varepsilon_C^0\right\|\left\|\varepsilon_C^k\right\| +\sum^{N}_{s=2}\left\|\varepsilon_C^{sk}\right\| \left\|\varepsilon_C^{(s-\frac{1}{2})k}\right\| +\sum^{N}_{s=2}\left\|\varepsilon_C^{(s-1)k}\right\| \left\|\varepsilon_C^{(s-\frac{1}{2})k}\right\|\right).
        \end{split}
      \end{equation}
      \vskip 0.2mm
      Now, taking the positive integer $\bar{m}$ such that $\left\|\varepsilon_C^{\bar{m}k}\right\|=\max\limits_{0\leq s\leq N}\left\|\varepsilon_C^{sk}\right\|$, we have
      \begin{equation}\label{eq4.12}
        \begin{split}
          \|\varepsilon_C^{Nk}\|\leq \|\varepsilon_C^{\bar{m}k}\| & \leq \|\varepsilon_C^{0}\|+\bar{C}\tau_C\left(\left\|\varepsilon_C^k\right\|+ \left\|\varepsilon_C^0\right\| +\sum^{\bar{m}}_{s=2}\left\|\varepsilon_C^{sk}\right\| +\sum^{\bar{m}}_{s=2}\left\|\varepsilon_C^{(s-1)k}\right\| \right)\\
             &\leq\|\varepsilon_C^{0}\|+\bar{C}\tau_C\left(\left\|\varepsilon_C^k\right\|+ \left\|\varepsilon_C^0\right\| +\sum^{N}_{s=2}\left\|\varepsilon_C^{sk}\right\| +\sum^{N}_{s=2}\left\|\varepsilon_C^{(s-1)k}\right\| \right)\\
             &\leq\|\varepsilon_C^{0}\|+\bar{C}\tau_C \left( \sum^{N}_{s=0}\left\|\varepsilon_C^{sk}\right\| +\sum^{N-1}_{s=1}\left\|\varepsilon_C^{sk}\right\| \right)\\
             &\leq\|\varepsilon_C^{0}\|+\bar{C}\tau_C\left\|\varepsilon_C^{Nk}\right\| +2\bar{C}\tau_C \sum^{N-1}_{s=0}\left\|\varepsilon_C^{sk}\right\|.
        \end{split}
      \end{equation}
      \vskip 0.2mm
      When $\tau_C\leq \frac{1}{2\bar{C}}$, following from Lemma \ref{lemma2.8}, inequality (\ref{eq4.12}) becomes
      \begin{equation}\label{eq4.13}
        \begin{split}
            \|\varepsilon_C^{Nk}\| &\leq \bar{C}(T)\|\varepsilon_{C}^{0}\|\exp\{N\tau_C\}\leq \bar{C}\|\varepsilon_{C}^{0}\|.
        \end{split}
      \end{equation}
      \vskip 0.2mm
       \item [(II)] Notice that according to \textbf{(I)} we have $\|U_C^{sk}\|\leq \bar{C}$ for any $1\leq s\leq N$. Then we estimate the $\|U_{C}^{(s-1)k+q}\|$ for $1\leq s\leq N$ and $1\leq q\leq k-1$. Considering \eqref{eq3.11} and applying the triangle inequality, we obtain
       \begin{equation}\label{eq4.14}
         \begin{split}
            \|U_{C}^{(s-1)k+q}\|=\|(1-\frac{q}{k}) U_C^{(s-1)k}+\frac{q}{k}U_C^{sk}\|
            \leq (1-\frac{q}{k})\|U_C^{(s-1)k}\|+\frac{q}{k}\|U_C^{sk}\|
            \leq \bar{C},
         \end{split}
       \end{equation}
       \end{itemize}
    which completes the proof.
    \end{proof}
    \vskip 0.2mm
    In addition, we shall analyse the stability on the fine grid.
    \begin{theorem}
      For the system \eqref{eq3.12} and \eqref{eq3.13} on the fine grid, with $1\leq n\leq \mathcal{N}$, we have $\|U_F^n\|\leq \bar{C}$.
    \end{theorem}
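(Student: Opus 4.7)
The plan is to adapt the energy argument of Theorem \ref{th4.1} to the \emph{linear} fine-grid system \eqref{eq3.12}-\eqref{eq3.13}, exploiting the bound $\|(U_C)^n\|\le\bar{C}$ for every $n$ that is already available from Theorem \ref{th4.1} (through \eqref{eq4.12}-\eqref{eq4.14}), together with the global bound $|g'|\le\bar{C}$ that follows from the Lipschitz assumption on $g$. With these two facts in hand, the right-hand side of \eqref{eq3.12}-\eqref{eq3.13} is controllable linearly in $U_F^n$, so what remains is a purely linear stability analysis entirely parallel to the coarse-grid argument.

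Concretely, I would take the discrete $L^2$ inner product of \eqref{eq3.12} with $U_F^1$ and of \eqref{eq3.13} with $U_F^{n-\frac{1}{2}}$, multiply each equation by $\tau_F$, and sum the latter from $n=2$ to an arbitrary $m\le\mathcal{N}$. Adding the two identities, the left-hand side splits into three pieces of exactly the same form as $\mathcal{H}_1,\mathcal{H}_2,\mathcal{H}_3$ in \eqref{eq4.8}-\eqref{eq4.10}: Lemma \ref{lemma2.7} yields a lower bound $\frac{1}{2}(\|U_F^m\|^2-\|U_F^0\|^2)$, Lemma \ref{lemma2.5} converts the $-\mu\Delta_h$ contribution into a non-negative $\nabla_h$-sum, and combining Lemma \ref{lemma2.5} with Lemma \ref{lemma2.6} shows that the memory contribution is non-negative as well. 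All three of these bounds can therefore be used as-is and the non-negative pieces discarded.

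For the right-hand side, I would use $\|g(U_F^{n-1})\|\le \|g(0)\|+\bar{C}\|U_F^{n-1}\|$ from the Lipschitz condition, and $\|g(U_C^n)+g'(U_C^n)(U_F^n-U_C^n)\|\le \bar{C}(1+\|U_F^n\|)$ from $|g'|\le\bar{C}$ together with $\|U_C^n\|\le\bar{C}$. Combined with the elementary inequality $\|U_F^{n-\frac{1}{2}}\|\le\frac{1}{2}(\|U_F^n\|+\|U_F^{n-1}\|)$ and Cauchy-Schwarz, this produces an estimate of the form
\begin{equation*}
\|U_F^m\|^2 \le \|U_F^0\|^2 + \bar{C} + \bar{C}\tau_F\sum_{n=0}^{m}\bigl(\|U_F^n\|+\|U_F^n\|^2\bigr).
\end{equation*}
Selecting an index $\bar m\le m$ with $\|U_F^{\bar m}\|=\max_{0\le n\le m}\|U_F^n\|$ exactly as in \eqref{eq4.12} and absorbing the implicit term $\bar{C}\tau_F\|U_F^m\|$ into the left-hand side under the restriction $\tau_F\le 1/(2\bar{C})$, the discrete Gr\"onwall inequality (Lemma \ref{lemma2.8}) yields $\|U_F^m\|\le\bar{C}$ uniformly in $1\le m\le\mathcal{N}$.

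The main obstacle I anticipate is the Taylor term $\frac{1}{2}g'(U_C^n)(U_F^n-U_C^n)$ on the right-hand side of \eqref{eq3.13}: it is \emph{implicit} in $U_F^n$, so unlike the lagged term $\frac{1}{2}g(U_F^{n-1})$ it cannot be pushed directly into the Gr\"onwall recursion but must instead be absorbed into the left-hand side at each step, which forces a mild step-size constraint analogous to the one used in \eqref{eq4.13}. A secondary technical point is that the bound $\|U_C^n\|\le\bar{C}$ must be used for \emph{every} $n$, not just the coarse nodes $n=sk$; this is guaranteed by the triangle inequality applied to the Lagrange formula \eqref{eq3.11} exactly as in \eqref{eq4.14}.
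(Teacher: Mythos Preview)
Your proposal is correct and follows essentially the same route as the paper: inner products of \eqref{eq3.12} and \eqref{eq3.13} with $U_F^1$ and $U_F^{n-\frac{1}{2}}$ respectively, the same use of Lemmas \ref{lemma2.5}--\ref{lemma2.7} to discard the diffusion and memory terms, the max-index trick, absorption of the implicit contribution under a step-size restriction, and the discrete Gr\"onwall inequality. The only noticeable difference is that the paper bounds $\|g(U_F^{n-1})\|$ by invoking an inductive hypothesis $\|U_F^{n-1}\|\le\bar{C}$ (so that $g(U_F^{n-1})$ is bounded on a compact set), whereas your direct use of the Lipschitz estimate $\|g(U_F^{n-1})\|\le\|g(0)\|+\bar{C}\|U_F^{n-1}\|$ is slightly cleaner and sidesteps the induction altogether.
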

    \begin{proof}
     \vskip 0.2mm
      Taking the inner product of \eqref{eq3.12} with $\tau_FU_F^1$, we have
      \begin{equation}\label{eq4.15}
        \begin{split}
          &\tau_F\left(\delta_{t}U_F^{1},U_F^1\right)-\mu\tau_F\left(\Delta_{h} U_F^{1},U_F^1\right)- \tau_F\mathfrak{w}_{1,1}\left(\Delta_{h} U_F^{1},U_F^1\right)  \\
      &=\tau_F\left(b^{1},U_F^1\right)+\frac{\tau_F}{2}\left(g(U_F^{0}),U_F^1\right)+ \frac{\tau_F}{2}\left(g(U_C^1)+ g'(U_C^1)\left(U_F^1-U_C^1\right),U_F^1\right).
        \end{split}
      \end{equation}
      \vskip 0.2mm
   For \eqref{eq3.13}, taking the inner product of both sides with $U_F^{n-\frac{1}{2}}$, multiplying it by $\tau_F$, and summing for $n$ from 2 to $\mathcal{N}$, we get
    \begin{equation}\label{eq4.16}
   \begin{split}
      &\sum^{\mathcal{N}}_{n=2}\tau_F\left(\delta_{t}U_F^{n},U_F^{n-\frac{1}{2}}\right) -\sum^{\mathcal{N}}_{n=2}\tau_F\mu\left(\Delta_{h} U_F^{n-\frac{1}{2}},U_F^{n-\frac{1}{2}}\right)- \sum^{\mathcal{N}}_{n=2}\tau_F\mathfrak{w}_{n,1}\left(\Delta_{h} U_F^{1},U_F^{n-\frac{1}{2}}\right)\\
      &-\sum^{\mathcal{N}}_{n=2}\tau_F\sum_{p=2}^{n}\mathfrak{w}_{n,p}\left(\Delta_{h} U_F^{p-\frac{1}{2}},U_F^{n-\frac{1}{2}}\right) =\sum^{\mathcal{N}}_{n=2}\tau_F\left(b^{n},U_F^{n-\frac{1}{2}}\right) +\sum^{\mathcal{N}}_{n=2}\frac{\tau_F}{2}\left(g(U_F^{n-1}),U_F^{n-\frac{1}{2}}\right)\\
      &+\sum^{\mathcal{N}}_{n=2}\frac{\tau_F}{2} \left(g(U_C^n)+g'(U_C^n) \left(U_F^n-U_C^n\right),U_F^{n-\frac{1}{2}}\right),
        \qquad(x_i,y_j)\in\Omega_{h},\qquad 2\leq n\leq \mathcal{N}.
   \end{split}
   \end{equation}
   \vskip 0.2mm
   Then, adding (\ref{eq4.15}) and (\ref{eq4.16}), and similar to the analysis of (\ref{eq4.6})-(\ref{eq4.10}), we obtain
   \begin{equation}\label{eq4.17}
     \begin{split}
        &\|U_F^{\mathcal{N}}\|^2-\|U_F^{0}\|^2\\
        &\leq  2\tau_F\|b^{1}\|\|U_F^1\|+ 2\sum^{\mathcal{N}}_{n=2}\tau_F\|b^{n}\|\|U_F^{n-\frac{1}{2}}\| +\tau_F\|g(U_F^{0})\|\|U_F^1\| +\sum^{\mathcal{N}}_{n=2}\tau_F\|g(U_F^{n-1})\|\|U_F^{n-\frac{1}{2}}\| \\
        & +\tau_F\|g(U_C^1)\|\|U_F^1\|+ \tau_F\|g'(U_C^1)\left(U_F^1-U_C^1\right)\|\|U_F^1\| \\
        &+\sum^{\mathcal{N}}_{n=2}\tau_F \|g(U_C^n)\|\|U_F^{n-\frac{1}{2}}\| +\sum^{\mathcal{N}}_{n=2}\tau_F\|g'(U_C^n) \left(U_F^n-U_C^n\right)\|\|U_F^{n-\frac{1}{2}}\|.
     \end{split}
   \end{equation}
   \vskip 0.2mm
   Based on the stability of the coarse grid, $\|U_C^n\|\leq \bar{C}(0\leq n\leq \mathcal{N})$ can be obtained. Then according to $g(u)\in C^2(\mathbf{R})\cap L^1(0,T]$, we have $g(U_C^n)\leq \bar{C}$ and $g'(U_C^n)\leq \bar{C}$. Also, assuming $\|U_F^n\|\leq \bar{C}$ holds for $0\leq n\leq \mathcal{N}-1$, then $g(U_F^n)\leq \bar{C}$ can be obtained, thus
   \begin{equation}\label{eq4.18}
     \begin{split}
        \|U_F^{\mathcal{N}}\|^2-\|U_F^{0}\|^2 & \leq 2\tau_F\|b^1\|\|U_F^1\| +2\sum^{\mathcal{N}}_{n=2}\tau_F\|b^{n}\|\|U_F^{n-\frac{1}{2}}\| +\bar{C}\tau_F\|U_F^1\|\\
        &+\bar{C}\sum^{\mathcal{N}}_{n=2}\tau_F\|U_F^{n-\frac{1}{2}}\| +\bar{C}\tau_F\left(\|U_F^1\|+\|U_C^1\|\right)\|U_F^1\| \\ & +\bar{C}\sum^{\mathcal{N}}_{n=2}\tau_F\left(\|U_F^n\|+\|U_C^n\|\right)\|U_F^{n-\frac{1}{2}}\|.
     \end{split}
   \end{equation}
   \vskip 0.2mm
   Denoting $\|U_F^{\check{m}}\|=\max\limits_{0\leq n\leq \mathcal{N}}\|U_F^n\|$, we can get
   \begin{equation}\label{eq4.19}
     \begin{split}
        \|U_F^{\tilde{m}}\|^2&\leq \|U_F^{0}\|^2+2\tau_F\|b^1\|\|U_F^1\| +2\sum^{\tilde{m}}_{n=2}\tau_F\|b^{n}\|\|U_F^{n-\frac{1}{2}}\| +\bar{C}\tau_F\|U_F^1\|
        +\bar{C}\sum^{\tilde{m}}_{n=2}\tau_F\|U_F^{n-\frac{1}{2}}\|\\
        &+\bar{C}\tau_F\left(\|U_F^1\|+\|U_C^1\|\right)\|U_F^1\| +\bar{C}\sum^{\tilde{m}}_{n=2}\tau_F\left(\|U_F^n\|+\|U_C^n\|\right)\|U_F^{n-\frac{1}{2}}\|\\
        &\leq \|U_F^{0}\| \|U_F^{\tilde{m}}\|+2\tau_F\|b^1\| \|U_F^{\tilde{m}}\| +2\sum^{\tilde{m}}_{n=2}\tau_F\|b^{n}\| \|U_F^{\tilde{m}}\| +\bar{C}\tau_F \|U_F^{\tilde{m}}\|
        +\bar{C}\sum^{\tilde{m}}_{n=2}\tau_F \|U_F^{\tilde{m}}\|\\
        &+\bar{C}\tau_F\left(\|U_F^1\|+\|U_C^1\|\right) \|U_F^{\tilde{m}}\| +\bar{C}\sum^{\tilde{m}}_{n=2}\tau_F\left(\|U_F^n\|+\|U_C^n\|\right) \|U_F^{\tilde{m}}\|.
     \end{split}
   \end{equation}
   \vskip 0.2mm
   Then
   \begin{equation}\label{eq4.20}
     \begin{split}
        \|U_F^{\mathcal{N}}\|\leq\|U_F^{\tilde{m}}\| &\leq \|U_F^{0}\| +2\sum^{\tilde{m}}_{n=1}\tau_F\|b^{n}\|+\bar{C}\sum^{\tilde{m}}_{n=1}\tau_F +\bar{C}\sum^{\tilde{m}}_{n=1}\tau_F\left(\|U_F^n\|+\|U_C^n\|\right)\\
        &\leq \|U_F^{0}\| +2\sum^{\mathcal{N}}_{n=1}\tau_F\|b^{n}\|+\bar{C}\sum^{\mathcal{N}}_{n=1}\tau_F +\bar{C}\sum^{\mathcal{N}}_{n=1}\tau_F\left(\|U_F^n\|+\|U_C^n\|\right).
     \end{split}
   \end{equation}
   When $\tau_F\leq \frac{1}{4\bar{C}}$, from Lemma \ref{lemma2.8} and Theorem \ref{th4.1}, inequality (\ref{eq4.20}) turn into the following
   \begin{equation}\label{eq4.21}
     \begin{split}
       \|U_F^{\mathcal{N}}\|\leq \bar{C}(T)\exp(\mathcal{N}\tau_F)\left(\|U_F^{0}\| +\sum^{\mathcal{N}}_{n=1}\tau_F\|b^{n}\|+\sum^{\mathcal{N}}_{n=1}\tau_F +\sum^{\mathcal{N}}_{n=1}\tau_F\|U_C^n\|\right)\leq \bar{C}.
     \end{split}
   \end{equation}
   This finishes the proof.
    \end{proof}
    \subsection{Convergence}
    The convergence of TTGCN finite difference scheme \eqref{eq3.9}-\eqref{eq3.11} on coarse grid will be analysis using the energy method. Let
    \begin{equation*}
      (e_C)_{ij}^{n}=u_{ij}^{n}-(U_C)_{ij}^{n},\qquad (x_i,y_j)\in\bar{\Omega}_{h},\qquad 0\leq n\leq \mathcal{N}.
    \end{equation*}
    \vskip 0.2mm
    Subtracting (\ref{eq3.9})-(\ref{eq3.10}), (\ref{eq3.7})-(\ref{eq3.8}) from (\ref{eq3.1})-(\ref{eq3.4}), respectively, we obtain the following error equations
     \begin{equation}\label{eq4.22}
     \begin{split}
     &\delta_{t}(e_C)^{k}_{ij}-\mu\Delta_{h} (e_C)^{k}_{ij}- \mathfrak{w}_{1,1}\Delta_{h} (e_C)^{k}_{ij} =\frac{1}{2}\left[g(u^k_{ij})-g((U_C)^k_{ij})\right]\\ &+\frac{1}{2}\left[g(u^0_{ij})-g((U_C)^0_{ij})\right]+ (R)^{k}_{ij},\quad (x_i,y_j)\in\Omega_{h},
     \end{split}
   \end{equation}
   \begin{equation}\label{eq4.23}
   \begin{split}
      &\delta_{t}(e_C)^{sk}_{ij}-\mu\Delta_{h} (e_C)^{(s-\frac{1}{2})k}_{ij}- \mathfrak{w}_{s,1}\Delta_{h} (e_C)^{k}_{ij}-\sum_{p=2}^{s}\mathfrak{w}_{s,p}\Delta_{h} (e_C)^{(p-\frac{1}{2})k}_{ij} \\ &=\frac{1}{2}\left[g(u^{sk}_{ij})-g((U_C)^{sk}_{ij})\right] +\frac{1}{2}\left[g(u^{(s-1)k}_{ij})-g((U_C)^{(s-1)k}_{ij})\right]
      +(R)^{sk}_{ij},\\
        &\qquad(x_i,y_j)\in\Omega_{h},\qquad 2\leq s\leq N,
   \end{split}
   \end{equation}
    \begin{equation}\label{eq4.24}
     (e_C)_{ij}^{n}=0,\qquad (x_i,y_j)\in\partial\Omega_{h},\qquad 1\leq n\leq \mathcal{N},
   \end{equation}
   \begin{equation}\label{eq4.25}
     (e_C)_{ij}^{0}=0,\qquad (x_i,y_j)\in\Omega_h,
   \end{equation}
   where $(R)=(R1)+(R2)+(R3)+(R4)$.

   \begin{theorem}\label{th4.3}
     Assume that $u(x,y,t)$ and $U_C^n$ are solutions of \eqref{eq3.1}-\eqref{eq3.2} and \eqref{eq3.9}-\eqref{eq3.10}, respectively, and that $u(x,y,t)$ satisfies the regularity assumptions \textbf{(A1)} and \textbf{(A2)}. Then, it holds that
     \begin{equation*}
     \begin{array}{ll}
     	\max\limits_{1\leq n\leq \mathscr{N}} \|u^n - U_C^n\|  \leq  \bar{C}( \tau_C^{2} + h_1^2 + h_2^2 ),\quad 1\leq n \leq \mathscr{N}.
        \end{array}
     \end{equation*}
   \end{theorem}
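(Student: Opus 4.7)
The plan is to mirror the stability argument of Theorem \ref{th4.1}, but now starting from the error equations \eqref{eq4.22}--\eqref{eq4.25} and tracking the truncation data $R = R1 + R2 + R3 + R4$ on the right-hand side. First I would take the inner product of \eqref{eq4.22} with $\tau_C e_C^{k}$, take the inner product of \eqref{eq4.23} with $\tau_C e_C^{(s-\frac{1}{2})k}$ and sum over $s = 2, \dots, m$ for an arbitrary $1 \leq m \leq N$, and then add the two resulting identities. This produces the same three structural quantities $\mathcal{H}_1, \mathcal{H}_2, \mathcal{H}_3$ that appeared in the stability proof, together with a Lipschitz-type right-hand side and an extra truncation contribution $\tau_C(R^{k}, e_C^{k}) + \sum_{s=2}^{m} \tau_C (R^{sk}, e_C^{(s-\frac{1}{2})k})$.

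For the left-hand side, Lemma \ref{lemma2.7} gives $\mathcal{H}_1 \geq \tfrac{1}{2}(\|e_C^{mk}\|^2 - \|e_C^{0}\|^2) = \tfrac{1}{2}\|e_C^{mk}\|^2$ since $e_C^{0}=0$ by \eqref{eq4.25}, while Lemma \ref{lemma2.5} turns $\mathcal{H}_2$ into a sum of nonnegative gradient norms, and Lemma \ref{lemma2.6} combined with Lemma \ref{lemma2.5} yields $\mathcal{H}_3 \geq 0$, exactly as in \eqref{eq4.8}--\eqref{eq4.10}. For the nonlinear right-hand side I would use the Lipschitz condition $|g(u^{sk})-g(U_C^{sk})| \leq \bar{C}\|e_C^{sk}\|$ and Cauchy--Schwarz to bound each pairing by $\bar{C}\tau_C\|e_C^{sk}\|\|e_C^{(s-\frac{1}{2})k}\|$, and similarly for the $(s-1)k$ terms. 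For the truncation pairings I would simply apply Cauchy--Schwarz, producing a factor $\tau_C \sum_{s=1}^{m} \|R^{sk}\|$ times $\max_{0\leq s\leq m}\|e_C^{sk}\|$.

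Next, I would collect the truncation bounds: Lemma \ref{lemma2.3} controls $\tau_C \sum \|R1\|$, Lemma \ref{lemma2.4} controls $\tau_C \sum \|R2\|$, Lemma \ref{lemma2.1} gives $R3 = O(\tau_C^2)$, and Lemma \ref{lemma2.2} (applied componentwise to $\Delta u - \Delta_h u$) gives $R4 = O(h_1^2 + h_2^2)$, so altogether
\begin{equation*}
\tau_C \sum_{s=1}^{m} \|R^{sk}\| \leq \bar{C}\bigl(\tau_C^{2} + h_1^2 + h_2^2\bigr).
\end{equation*}
Choosing $\bar{m}$ with $\|e_C^{\bar{m}k}\| = \max_{0 \leq s \leq N} \|e_C^{sk}\|$ and taking $m = \bar{m}$, the inequality obtained above can be divided through by $\|e_C^{\bar{m}k}\|$ (just as in \eqref{eq4.12}), yielding
\begin{equation*}
\|e_C^{\bar{m}k}\| \leq \bar{C}\bigl(\tau_C^2 + h_1^2 + h_2^2\bigr) + \bar{C}\tau_C \sum_{s=0}^{\bar{m}-1} \|e_C^{sk}\|
\end{equation*}
after absorbing a harmless $\bar{C}\tau_C\|e_C^{\bar{m}k}\|$ term under the restriction $\tau_C \leq 1/(2\bar{C})$. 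Finally, Lemma \ref{lemma2.8} (discrete Gr\"onwall) delivers $\|e_C^{\bar{m}k}\| \leq \bar{C}(\tau_C^2 + h_1^2 + h_2^2)\exp(N\tau_C) \leq \bar{C}(\tau_C^2 + h_1^2 + h_2^2)$, which is the claimed bound.

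The main technical obstacle is the memory term $\mathcal{H}_3$: unlike the clean telescoping in $\mathcal{H}_1$ or the pointwise positivity in $\mathcal{H}_2$, its positivity relies crucially on rewriting the weighted convolution in the form $(\mathfrak{L}_{2,\tau}^{s}\nabla_h e_C^{sk}, \nabla_h e_C^{(s-\frac{1}{2})k})$ so that Lemma \ref{lemma2.6} applies; care is needed to pair the $\mathfrak{w}_{1,1}$ piece from \eqref{eq4.22} with the $\mathfrak{w}_{s,1}$ tail from \eqref{eq4.23} before invoking the lemma. Once that positivity is secured, the remainder of the argument is a routine Lipschitz-plus-Gr\"onwall estimate, and the same machinery will later serve for bounding the fine-grid error.
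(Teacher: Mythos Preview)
Your energy argument for the coarse-grid time levels $t_{sk}$, $1\le s\le N$, is essentially the paper's part (I) and is carried out correctly: the treatment of $\mathcal{H}_1,\mathcal{H}_2,\mathcal{H}_3$ via Lemmas \ref{lemma2.5}--\ref{lemma2.7}, the Lipschitz handling of $g$, the truncation bounds from Lemmas \ref{lemma2.1}--\ref{lemma2.4}, and the max-trick plus discrete Gr\"onwall all match \eqref{eq4.26}--\eqref{eq4.31}.

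However, there is a genuine gap. The theorem asserts
\[
\max_{1\le n\le \mathcal{N}}\|u^n-U_C^n\|\le \bar C(\tau_C^2+h_1^2+h_2^2),
\]
i.e.\ the bound is claimed at \emph{every} fine-grid level $t_n$, not only at the coarse-grid levels $t_{sk}$. For $n=(s-1)k+q$ with $1\le q\le k-1$, the quantity $U_C^n$ is defined by the Lagrange interpolation \eqref{eq3.11}, and your argument never touches those levels. The paper therefore has a second step: write
\[
u^{(s-1)k+q}=(1-\tfrac{q}{k})u^{(s-1)k}+\tfrac{q}{k}u^{sk}+\tfrac{1}{2}u''(\xi)(t_{(s-1)k+q}-t_{(s-1)k})(t_{(s-1)k+q}-t_{sk}),
\]
subtract \eqref{eq3.11}, and combine the already-proved bound $\|e_C^{sk}\|\le\bar C(\tau_C^2+h_1^2+h_2^2)$ with the interpolation remainder $\tfrac{1}{2}\|u''(\xi)\|_\infty\,\tau_C^2$ to obtain \eqref{eq4.33}. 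Without this step the conclusion ``which is the claimed bound'' is unjustified, and the statement you actually need later (namely $\|e_C^n\|\le\bar C(\tau_C^2+h_1^2+h_2^2)$ for all $n$, used in \eqref{eq4.43}) is not available.
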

   \begin{proof}
     The proof of this theorem is divided into two steps:
     \vskip 0.2mm
     \textbf{(I)}. Taking the inner product of equations (\ref{eq4.22}) and (\ref{eq4.23}) with $e_C^k$ and $e_C^{(s-\frac{1}{2})k}$ respectively, and multiplying both equations by $\tau_C$, summing for $s$ from 2 to $N$ in (\ref{eq4.23}) and adding (\ref{eq4.22}), then we can obtain
     \begin{equation}\label{eq4.26}
   \begin{split}
      &\tilde{\mathcal{H}}_1+\tilde{\mathcal{H}}_2+\tilde{\mathcal{H}}_3 =\frac{\tau_C}{2}\left(g(u^k)-g(U_C^k),e_C^k\right) +\frac{\tau_C}{2}\left(g(u^0)-g(U_C^0),e_C^k\right) \\&+\sum^{N}_{s=2}\frac{\tau_C}{2}\left(g(u^{sk})-g(U_C^{sk}) ,e_C^{(s-\frac{1}{2})k}\right)
     +\sum^{N}_{s=2}\frac{\tau_C}{2}\left(g(u^{(s-1)k})-g(U_C^{(s-1)k}) ,e_C^{(s-\frac{1}{2})k}\right)\\
     &+\tau_C\left((R)^k,e_C^k\right) +\sum_{s=2}^{N}\tau_C\left((R)^{sk},e_C^{(s-\frac{1}{2})k}\right),
   \end{split}
   \end{equation}
   where
   \begin{equation*}
      \tilde{\mathcal{H}}_1=\tau_C\left(\delta_{t}e_C^{k},e_C^k\right) +\sum^{N}_{s=2}\tau_C\left(\delta_{t}e_C^{sk} ,e_C^{(s-\frac{1}{2})k}\right),
      \end{equation*}
      \begin{equation*}
      \tilde{\mathcal{H}}_1=-\tau_C\mu\left(\Delta_{h} e_C^{k},e_C^k\right)-\sum^{N}_{s=2}\mu\tau_C\left(\Delta_{h} e_C^{(s-\frac{1}{2})k}, e_C^{(s-\frac{1}{2})k}\right),
      \end{equation*}
      \begin{equation*}
      \begin{split}
      \tilde{\mathcal{H}}_3 =&-\tau_C\mathfrak{w}_{1,1}\left(\Delta_{h} e_C^{k},e_C^k\right)-\sum^{N}_{s=2}\mathfrak{w}_{s,1}\tau_C \left(\Delta_{h}e_C^{k} ,e_C^{(s-\frac{1}{2})k}\right) \\
      &-\sum^{N}_{s=2}\tau_C\sum_{p=2}^{s}\mathfrak{w}_{s,p}\left(\Delta_{h} e_C^{(p-\frac{1}{2})k},e_C^{(s-\frac{1}{2})k}\right).
      \end{split}
      \end{equation*}
      \vskip 0.2mm
      For (\ref{eq4.26}), applying Lemmas \ref{lemma2.5}-\ref{lemma2.7} and Cauchy-Schwarz inequality, we get the following inequality
      \begin{equation}\label{eq4.27}
        \begin{split}
           &\|e_C^{Nk}\|^{2}-\|e_C^{0}\|^{2}\\ &\leq\tau_C\|g(u^k)-g(U_C^k)\| \|e_C^k\| +\tau_C\|g(u^0)-g(U_C^0)\|\|e_C^k\| \\&+\sum^{N}_{s=2}\tau_C\|g(u^{sk})-g(U_C^{sk}) \|\|e_C^{(s-\frac{1}{2})k}\|
        +\sum^{N}_{s=2}\tau_C\|g(u^{(s-1)k})-g(U_C^{(s-1)k}) \|\|e_C^{(s-\frac{1}{2})k}\|  \\
             &+2\tau_C\|(R)^k\|\|e_C^k\| +2\sum_{s=2}^{N}\tau_C\|(R)^{sk}\|\|e_C^{(s-\frac{1}{2})k}\|
             \\&\leq \bar{C}\tau_C\left(\|e_C^k\|^2+ \|e_C^0\|\|e_C^k\| +\sum^{N}_{s=2}\|e_C^{sk}\| \|e_C^{(s-\frac{1}{2})k}\| +\sum^{N}_{s=2}\|e_C^{(s-1)k}\| \|e_C^{(s-\frac{1}{2})k}\|\right)\\
             &+2\tau_C\|(R)^k\|\|e_C^k\| +2\sum_{s=2}^{N}\tau_C\|(R)^{sk}\|\|e_C^{(s-\frac{1}{2})k}\|.
        \end{split}
      \end{equation}
      \vskip 0.2mm
      Choosing a positive integer $\bar{s}$ such that $\|e_C^{\bar{s}k}\|= \max\limits_{0\leq s\leq N}\|e_C^{sk}\|$ and noting that (\ref{eq4.24}), then we have
      \begin{equation}\label{eq4.28}
        \begin{split}
        \|e_C^{Nk}\|\leq \|e_C^{\bar{s}k}\|&\leq \bar{C}\tau_C\left(\|e_C^k\|+ \sum^{\bar{s}}_{s=2}\|e_C^{sk}\| +\sum^{\bar{s}}_{s=2}\|e_C^{(s-1)k}\|\right)
        +2\sum_{s=1}^{\bar{s}}\tau_C\|(R)^{sk}\|\\
        &\leq \bar{C}\tau_C\left(\|e_C^k\|+ \sum^{N}_{s=2}\|e_C^{sk}\| +\sum^{N}_{s=2}\|e_C^{(s-1)k}\|\right)
        +2\sum_{s=1}^{N}\tau_C\|(R)^{sk}\|\\
        &\leq \bar{C}\tau_C\left(\sum^{N}_{s=1}\|e_C^{sk}\|
        +\sum_{s=1}^{N}\|(R)^{sk}\|\right).
        \end{split}
      \end{equation}
       \vskip 0.2mm
       Using Lemma \ref{lemma2.8}, then \eqref{eq4.28} becomes the following
       \begin{equation}\label{eq4.29}
         \|e_C^{Nk}\|\leq \bar{C}(T)\exp\{N\tau_C\}\left(\tau_C\sum^{N}_{s=1}\|(R)^{sk}\|\right).
       \end{equation}
       \vskip 0.2mm
       In addition, from Lemmas \ref{lemma2.1}-\ref{lemma2.4} and using triangle inequality, we can get the following estimates
       \begin{equation}\label{eq4.30}
       \begin{split}
          \tau_C\sum^{N}_{s=1}\|(R)^{sk}\|&= \tau_C\sum^{N}_{s=1}\|(R1)^{sk}+(R2)^{sk}+(R3)^{sk}+(R4)^{sk}\| \\
            &\leq \tau_C\sum^{N}_{s=1}\left(\|(R1)^{sk}\|+\|(R2)^{sk}\|+\|(R3)^{sk}\|+\|(R4)^{sk}\|\right)\\
            &\leq \bar{C}(T)(\tau_C^2+h_1^2+h_2^2).
       \end{split}
       \end{equation}
       \vskip 0.2mm
       Finally combining (\ref{eq4.29}) and (\ref{eq4.30}), we have
       \begin{equation}\label{eq4.31}
         \|e_C^{sk}\|\leq\bar{C}(T)(\tau_C^2+h_1^2+h_2^2),\qquad 1\leq s\leq N.
       \end{equation}
       \textbf{(II)}. For any $1\leq s \leq N$ and $1 \leq q \leq k-1$, we utilize the Lagrange's interpolation formula, then
    \begin{equation}\label{eq4.32}
    \begin{split}
       u^{(s-1)k+q}  &=  (1 - \frac{q}{k}) u^{(s-1)k} + \frac{q}{k} u^{sk}\\
       &+ \frac{u''(\xi)}{2}(t_{(s-1)k+q} - t_{(s-1)k} )(t_{(s-1)k+q} - t_{sk} ),\qquad \xi\in(t_{(s-1)k},t_{sk}).
    \end{split}
     \end{equation}

 \vskip 0.2mm
    Subtracting (\ref{eq3.11}) from (\ref{eq4.32}), we have
  \begin{equation*}
    \begin{split}
     e_C^{(s-1)k+q}
     = (1 - \frac{q}{k}) e_C^{(s-1)k} + \frac{q}{k} e_C^{sk}
       + \frac{u''(\xi)}{2}(t_{(s-1)k+q} - t_{(s-1)k} )(t_{(s-1)k+q} - t_{sk} ),
    \end{split}
    \end{equation*}
    then, applying the triangle inequality and (\ref{eq4.31}), we obtain
  \begin{equation}\label{eq4.33}
    \begin{split}
     \|e_C^{(s-1)k+q}\|
    &\leq (1 - \frac{q}{k}) \|e_C^{(s-1)k}\| + \frac{q}{k} \|e_C^{sk}\|
       + \frac{\|u''(\xi)\|_{\infty}}{2}\tau_C^2  \\
    &\leq \bar{C}(\tau_{C}^{2} + h_1^2 + h_2^2 ),  \qquad  1 \leq s \leq N,  \qquad  1 \leq q \leq k-1.
    \end{split}
  \end{equation}
    The proof is finished.
   \end{proof}
   \vskip 0.2mm
   Next, the convergence on the fine grid will be considered. Let
   \begin{equation*}
      (e_F)_{ij}^{n}=u_{ij}^{n}-(U_F)_{ij}^{n},\qquad (x_i,y_j)\in\bar{\Omega}_{h},\qquad 0\leq n\leq \mathcal{N}.
    \end{equation*}
    \vskip 0.2mm
    Subtracting (\ref{eq3.12})-(\ref{eq3.13}), (\ref{eq3.7})-(\ref{eq3.8}) from (\ref{eq3.1})-(\ref{eq3.4}), respectively, we yield the following error equations
     \begin{equation}\label{eq4.34}
     \begin{split}
     &\delta_{t}(e_F)^{1}_{ij}-\mu\Delta_{h} (e_F)^{1}_{ij}- \mathfrak{w}_{1,1}\Delta_{h} (e_F)^{1}_{ij} =\frac{1}{2}\left[g(u^0_{ij})-g((U_F)^0_{ij})\right]\\ &+\frac{1}{2}\left[g(u_{ij}^1)-g((U_C)^1_{ij})-g'((U_C)^1_{ij}) \left((U_F)_{ij}^1-(U_C)_{ij}^1\right)\right]+(R)_{ij}^1,\quad (x_i,y_j)\in\Omega_{h},
     \end{split}
   \end{equation}
   \begin{equation}\label{eq4.35}
   \begin{split}
      &\delta_{t}(e_F)^{n}_{ij}-\mu\Delta_{h} (e_F)^{n-\frac{1}{2}}_{ij}- \mathfrak{w}_{n,1}\Delta_{h} (e_F)^{1}_{ij}-\sum_{p=2}^{n}\mathfrak{w}_{n,p}\Delta_{h} (e_F)^{p-\frac{1}{2}}_{ij}  \\&=
\frac{1}{2}\left[g(u^{n-1}_{ij})
      -g((U_F)^{n-1}_{ij})\right] +\frac{1}{2}\left[g(u_{ij}^n)-g((U_C)^n_{ij})-g'((U_C)^n_{ij}) \left((U_F)_{ij}^n-(U_C)_{ij}^n\right)\right]
      \\
        &+(R)^{n}_{ij},\qquad(x_i,y_j)\in\Omega_{h},\qquad 2\leq n\leq \mathcal{N},
   \end{split}
   \end{equation}
    \begin{equation}\label{eq4.36}
     (e_F)_{ij}^{n}=0,\qquad (x_i,y_j)\in\partial\Omega_{h},\qquad 1\leq n\leq \mathcal{N},
   \end{equation}
   \begin{equation}\label{eq4.37}
     (e_F)_{ij}^{0}=0,\qquad (x_i,y_j)\in\Omega_h.
   \end{equation}
   \begin{theorem}
     Assume that $u(x,y,t)$ and $U_F^n$ are solutions of \eqref{eq3.1}-\eqref{eq3.2} and \eqref{eq3.12}-\eqref{eq3.13}, respectively, and let $u(x,y,t)$ satisfy the regularity assumption \textbf{(A1)} and \textbf{(A2)}, then we have the following
       \begin{equation*}
     \begin{split}
     	\|e_F^n\|  \leq  \bar{C}( \tau_F^{2} + \tau_C^{4} + h_1^2 + h_2^2 ),\qquad 1\leq n\leq \mathcal{N}.
        \end{split}
     \end{equation*}
     \begin{proof}
       Taking the inner product of (\ref{eq4.34}) with $\tau_F e_F^1$, we obtain
       \begin{equation}\label{eq4.38}
         \begin{split}
            &\tau_F\left(\delta_{t}e_F^{1},e_F^1\right)-\mu\tau_F\left(\Delta_{h} e_F^{1},e_F^1\right)- \mathfrak{w}_{1,1}\tau_F\left(\Delta_{h} e_F^{1},e_F^1\right) =\frac{\tau_F}{2}\left(g(u^0)-g(U_F^0),e_F^1\right)\\ &+\frac{\tau_F}{2}\left(g(u^1)-g(U_C^1)-g'(U_C^1) \left(U_F^1-U_C^1\right),e_F^1\right)+\tau_F\left((R)^1,e_F^1\right).
         \end{split}
       \end{equation}
       \vskip 0.2mm
       Then taking the inner product of equation (\ref{eq4.35}) with $\tau_F e_F^{n-\frac{1}{2}}$ and summing for $n$ from $2$ to $\mathcal{N}$, we can get
       \begin{equation}\label{eq4.39}
         \begin{split}
            &\sum^{\mathcal{N}}_{n=2}\tau_F\left(\delta_{t}e_F^{n},e_F^{n-\frac{1}{2}}\right) -\sum^{\mathcal{N}}_{n=2}\mu\tau_F\left(\Delta_{h} e_F^{n-\frac{1}{2}},e_F^{n-\frac{1}{2}}\right)- \sum^{\mathcal{N}}_{n=2}\mathfrak{w}_{n,1}\tau_F\left(\Delta_{h} e_F^{1},e_F^{n-\frac{1}{2}}\right) \\ &-\sum^{\mathcal{N}}_{n=2}\tau_F\sum_{p=2}^{n}\mathfrak{w}_{n,p}\left(\Delta_{h} e_F^{p-\frac{1}{2}},e_F^{n-\frac{1}{2}}\right)=
              \frac{\tau_F }{2}\sum^{\mathcal{N}}_{n=2}\left(g(u^{n-1})
            -g(U_F^{n-1}),e_F^{n-\frac{1}{2}}\right) \\&+\frac{\tau_F}{2}\sum^{\mathcal{N}}_{n=2} \left(g(u^n)-g((U_C)^n)-g'(U_C^n) \left(U_F^n-U_C^n\right),e_F^{n-\frac{1}{2}}\right)
        +\sum^{\mathcal{N}}_{n=2}\tau_F\left((R)^{n},e_F^{n-\frac{1}{2}}\right).
         \end{split}
       \end{equation}
       \vskip 0.2mm
       Adding (\ref{eq4.38}) and (\ref{eq4.39}), then using Lemmas \ref{lemma2.5}-\ref{lemma2.7}, Cauchy-Schwarz inequality and triangle inequality, and noting (\ref{eq4.36}), we can get
       \begin{equation}\label{eq4.40}
         \begin{split}
        \|e_F^{\mathcal{N}}\|^2&\leq\tau_F\|g(u^1)-g(U_C^1)-g'(U_C^1) \left(U_F^1-U_C^1\right)\|\|e_F^1\|+2\tau_F\|R^1\|\|e_F^1\| \\&+\tau_F\sum^{\mathcal{N}}_{n=2} \|g(u^n)-g(U_C^n)-g'(U_C^n) \left(U_F^n-U_C^n\right)\|\|e_F^{n-\frac{1}{2}}\|\\
        &+\bar{C}\tau_F\sum^{\mathcal{N}}_{n=2}\|e_F^{n-1}\|\|e_F^{n-\frac{1}{2}}\|+ 2\sum^{\mathcal{N}}_{n=2}\tau_F\|(R)^{n}\|\|e_F^{n-\frac{1}{2}}\|.
         \end{split}
       \end{equation}
       \vskip 0.2mm
       Choosing a suitable $\check{s}$ such that $\|e_F^{\check{s}}\|=\max\limits_{0\leq n\leq \mathcal{N}}\|e_F^n\|$, then it holds
       \begin{equation}\label{eq4.41}
         \begin{split}
            \|e_F^{\mathcal{N}}\|\leq \|e_F^{\check{s}}\|&\leq \tau_F\sum^{\mathcal{N}}_{n=1} \|g(u^n)-g(U_C^n)-g'(U_C^n) \left(U_F^n-U_C^n\right)\|\\
        &+\bar{C}\tau_F\sum^{\mathcal{N}}_{n=2}\|e_F^{n-1}\|+ 2\sum^{\mathcal{N}}_{n=1}\tau_F\|(R)^{n}\|.
         \end{split}
       \end{equation}
       \vskip 0.2mm
       According to Taylor expansion, we have
       \begin{equation}\label{eq4.42}
       \begin{split}
           &g(u^n)-g(U_C^n)-g'(U_C^n) \left(U_F^n-U_C^n\right)\\
           &= g'(U_C^{n})(u^{n} - U_C^{n} ) + \frac{1}{2}g''(\theta^n)(u^{n} - U_C^{n})^2  -  g'(U_C^{n})(U_F^{n} - U_C^{n} )  \\
       &= g'(U_C^{n})e_F^{n}  + \frac{1}{2}g''(\theta^n)(e_C^{n})^2,\quad \theta^n\in\big(\min \{u^n,U_C^n\},\max \{u^n,U_C^n\}\big).
       \end{split}
       \end{equation}
       \vskip 0.2mm
       Substituting (\ref{eq4.42}) into (\ref{eq4.41}) and applying the triangle inequality, we can get
       \begin{equation}\label{eq4.43}
         \begin{split}
            \|e_F^{\mathcal{N}}\|&\leq \bar{C}\tau_F\sum^{\mathcal{N}}_{n=1} (\|e_F^n\|+\|e_C^n\|^2)+\bar{C}\tau_F\sum^{\mathcal{N}}_{n=2}\|e_F^{n-1}\|+ 2\sum^{\mathcal{N}}_{n=1}\tau_F\|(R)^{n}\|\\
            &\leq \bar{C}\tau_F\sum^{\mathcal{N}}_{n=1} \|e_F^n\|+\bar{C}\tau_F\sum^{\mathcal{N}}_{n=1}\|e_C^n\|^2+ 2\sum^{\mathcal{N}}_{n=1}\tau_F\|(R)^{n}\|.
         \end{split}
       \end{equation}
       \vskip 0.2mm
       Utilizing Lemma \ref{lemma2.8} and Theorem \ref{th4.3}, we yield
       \begin{equation}\label{eq4.44}
         \begin{split}
             \|e_F^{\mathcal{N}}\|&\leq \bar{C}\exp\{\mathcal{N}\tau_F\} \left(\tau_F\sum^{\mathcal{N}}_{n=1}\|e_C^n\|^2+ \sum^{\mathcal{N}}_{n=1}\tau_F\|(R)^{n}\|\right) \\
              & \leq \bar{C}\left(\tau_C^4+\tau_F^2+h_1^2+h_2^2\right),
         \end{split}
       \end{equation}
       which completes the proof.
     \end{proof}
   \end{theorem}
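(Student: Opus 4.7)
The plan is to apply the energy method to the error equations \eqref{eq4.34}--\eqref{eq4.37}, mirroring the stability analysis of the previous theorem but now tracking truncation contributions. First I would take the inner product of \eqref{eq4.34} with $\tau_F e_F^1$ and of \eqref{eq4.35} with $\tau_F e_F^{n-\frac{1}{2}}$, sum the latter over $n=2,\ldots,\mathcal{N}$, and add. The three structural terms on the left-hand side (the discrete time derivative, the $\mu\Delta_h$ term, and the memory term) are handled via Lemmas \ref{lemma2.5}--\ref{lemma2.7} exactly as in \eqref{eq4.8}--\eqref{eq4.10}, yielding $\tfrac{1}{2}\|e_F^{\mathcal{N}}\|^2 - \tfrac{1}{2}\|e_F^0\|^2$ plus nonnegative gradient and memory contributions. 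Since $e_F^0=0$ by \eqref{eq4.37}, only $\|e_F^{\mathcal{N}}\|^2$ survives on the left.

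Next I would treat the right-hand side. The Lipschitz differences $g(u^{n-1})-g(U_F^{n-1})$ are bounded directly by $\bar{C}\|e_F^{n-1}\|$. The key linearized expression $g(u^n)-g(U_C^n)-g'(U_C^n)(U_F^n-U_C^n)$ is the crucial piece: a second-order Taylor expansion of $g$ about $U_C^n$ evaluated at $u^n$ rewrites it as $g'(U_C^n)\,e_F^n + \tfrac{1}{2}g''(\theta^n)(e_C^n)^2$, precisely as in \eqref{eq4.42}. The first summand contributes $\bar{C}\|e_F^n\|$ (since the stability estimate on the coarse grid gives $|g'(U_C^n)|\le \bar{C}$), while the second produces $\bar{C}\|e_C^n\|^2$, which by Theorem \ref{th4.3} is dominated by $\bar{C}(\tau_C^2+h_1^2+h_2^2)^2\le \bar{C}(\tau_C^4+h_1^4+h_2^4)$. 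The truncation terms $(R)^n=(R1)^n+(R2)^n+(R3)^n+(R4)^n$ are controlled by Lemmas \ref{lemma2.3}--\ref{lemma2.4} together with the $O(\tau_F^2)$ bound on $(R3)$ and the $O(h_1^2+h_2^2)$ bound on $(R4)$, yielding $\tau_F\sum_{n=1}^{\mathcal{N}}\|(R)^n\|\le \bar{C}(\tau_F^2+h_1^2+h_2^2)$.

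After bounding each right-hand term using Cauchy--Schwarz, I would select $\check{s}$ so that $\|e_F^{\check{s}}\|=\max_{0\le n\le \mathcal{N}}\|e_F^n\|$ and divide through by $\|e_F^{\check{s}}\|$, reducing the quadratic inequality to a linear one in the maxima. This produces an estimate of the form
\[
\|e_F^{\mathcal{N}}\|\le \bar{C}\tau_F\sum_{n=1}^{\mathcal{N}}\|e_F^n\| + \bar{C}\tau_F\sum_{n=1}^{\mathcal{N}}\|e_C^n\|^2 + 2\tau_F\sum_{n=1}^{\mathcal{N}}\|(R)^n\|.
\]
Under the mild step-size restriction $\tau_F\le 1/(4\bar{C})$, the discrete Gr\"onwall inequality (Lemma \ref{lemma2.8}) absorbs the $\|e_F^n\|$ sum, and substituting the coarse-grid bound of Theorem \ref{th4.3} together with the truncation estimates yields the desired $O(\tau_F^2+\tau_C^4+h_1^2+h_2^2)$ order.

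The main obstacle I expect is the careful accounting of the Taylor remainder factor $(e_C^n)^2$: one must verify that this squared coarse-grid error, after being multiplied by $\tau_F$ and summed over $\mathcal{N}$ steps, truly produces a $\tau_C^4$ contribution (rather than degrading to $\tau_C^2$), which is exactly what permits $\tau_C\gg\tau_F$ while preserving second-order global accuracy. Handling the $n=1$ term separately (since Lemma \ref{lemma2.7} treats it distinctly) and confirming that the memory term remains nonnegative via Lemma \ref{lemma2.6} are routine but must be executed with care so that the linearization error does not leak spurious lower-order terms into the final bound.
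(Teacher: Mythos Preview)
Your proposal is correct and follows essentially the same route as the paper: take inner products with $\tau_F e_F^1$ and $\tau_F e_F^{n-1/2}$, add and handle the left side via Lemmas~\ref{lemma2.5}--\ref{lemma2.7}, rewrite the linearized nonlinear term by the Taylor expansion $g'(U_C^n)e_F^n+\tfrac12 g''(\theta^n)(e_C^n)^2$, pass to the maximizing index $\check{s}$ to linearize the inequality, and close with the discrete Gr\"onwall lemma together with Theorem~\ref{th4.3} and the truncation bounds. The paper carries out exactly these steps in \eqref{eq4.38}--\eqref{eq4.44}, and your identification of the $(e_C^n)^2\to\tau_C^4$ mechanism as the crux is precisely the point of \eqref{eq4.42}--\eqref{eq4.44}.
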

  \section{Numerical experiment}\label{secc5}
     \vskip 0.2mm
     In this section, we will use the TTGCN finite difference scheme \eqref{eq3.9}-\eqref{eq3.13} to solve problem \eqref{eq1.1}-\eqref{eq1.3} and apply the method to three test problems. In order to verify the validity of the method, we also compare the results obtained from proposed scheme with the existing methods, e.g., the SCN finite difference scheme \eqref{eq3.5}-\eqref{eq3.8} and the scheme \cite{xu2020time}. We set $L_x=L_y=1$ and $T=1$. All experiments are performed on a Windows 11 (64 bit) PC-Inter(R) Core(TM) i5-12500H CPU 3.10 GHz, 16.0 GB of RAM using MTALAB R2021b.
     \vskip 0.2mm
     The discrete $L^2$-norm error is defined as follows
     \begin{equation*}
       E_{TTGCN}(h,\tau)=\max_{1\leq n\leq \mathcal{N}}\|u^n-U_F^n\|,
     \end{equation*}
     and the time-space convergence orders are defined by
     \begin{equation*}
       rate^t_{TTGCN}=\log_2\left(\frac{E_{TTGCN}(h,2\tau)}{E_{TTGCN}(h,\tau)}\right), \qquad rate^x_{TTGCN}=\log_2\left(\frac{E_{TTGCN}(2h,\tau)}{E_{TTGCN}(h,\tau)}\right).
     \end{equation*}
     In addition, we can similarly define $E_{SCN}(h,\tau)$, $rate^t_{SCN}$ and $rate^x_{SCN}$.

\begin{example}\label{ex1}
We consider the nonlinear term is given by $g(u)=-u^2$, $\mu=1$ and the inhomogeneous term is
     \begin{equation*}
         \begin{split}
            f(x,y,t)=& \left[(1+2\pi)\frac{t^{\alpha}}{\Gamma(\alpha+1)}+2\pi^{2}\left(1+\frac{ t^{\alpha+1}}{\Gamma(\alpha+2)}+\frac{t^{2\alpha+1}}{\Gamma(2\alpha+2)}\right)\right]\sin\pi x\sin\pi y\\
            &+\left(1+\frac{t^{\alpha+1}}{\Gamma(2+\alpha)}\sin\pi x\sin\pi y\right)^{2}.
         \end{split}
       \end{equation*}

       The exact solution of this problem is presented as follows
       \begin{equation*}\centering
         u(x,y,t)=\left(1+\frac{t^{\alpha+1}}{\Gamma(2+\alpha)}\right)\sin\pi x\sin\pi y.
       \end{equation*}
       \end{example}
       \vskip 0.2mm
       In Table \ref{tab:1}, we obtain the corresponding discrete $L^2$-norm errors, time convergence order and CPU time by calculating Example \ref{ex1} with the TTGCN finite difference scheme (\ref{eq3.9})-(\ref{eq3.13}) and the SCN finite difference method (\ref{eq3.5})-(\ref{eq3.8}). The numerical results show that the convergence order of the two schemes converges to 2 in the time direction, which is consistent with the theoretical analysis. Meanwhile, we compare the numerical results of the two methods in terms of temporal convergence order and computational cost (CPU time in seconds), and see that the TTGCN finite difference scheme can save computational cost significantly without losing computational accuracy.

 \begin{table}\small\centering
     	\caption{ The $L^2$-errors, convergence rates and CPU time (seconds) with $h=1/100$ and $k=4$ for Example \ref{ex1}.}
     \label{tab:1}  
     \begin{tabular}{ccccccccc}
      \hline\noalign{\smallskip}
     		$\alpha $    & $\tau_C$   & $\tau_F$  &  $E_{TTGCN}$    & $rate^{t}_{TTGCN}$    &$CPU(s)$  &  $E_{SCN}$    & $rate^{t}_{SCN}$   &$CPU(s)$  \\
     \noalign{\smallskip}\hline\noalign{\smallskip}
                         & 1/2     & 1/8    &  2.9293e-2    &  *                  & 41.42     &  2.9294e-2    &  *                  & 83.85    \\
			             & 1/4     & 1/16   &  9.9431e-3    &  1.5588             & 75.53     &  9.9431e-3    &  1.5588             & 159.79     \\
			  0.25       & 1/8     & 1/32   &  2.9743e-3    &  1.7412             & 176.76    &  2.9743e-3    &  1.7412             & 307.44      \\
		                 & 1/16    & 1/64   &  7.7382e-4    &  1.9425             & 439.63    &  7.7382e-4    &  1.9425             & 696.57      \\
     \noalign{\smallskip}\hline\noalign{\smallskip}
                         & 1/2     & 1/8    &  1.5390e-2    &  *                  & 35.26     &  1.5391e-2    &  *                  & 83.48    \\
			             & 1/4     & 1/16   &  4.2211e-3    &  1.8663             & 77.16     &  4.2212e-3    &  1.8664             & 160.76      \\
			 0.5         & 1/8     & 1/32   &  1.0102e-3    &  2.0630             & 177.43    &  1.0102e-3    &  2.0630             & 304.59    \\
		                 & 1/16    & 1/64   &  2.0588e-4    &  2.2948             & 441.57    &  2.0589e-4    &  2.2948             & 700.46      \\
     \noalign{\smallskip}\hline\noalign{\smallskip}
                         & 1/2     & 1/8    &  7.7023e-3    &  *                  & 35.64     &  7.7034e-3    &  *                  & 83.19    \\
			             & 1/4     & 1/16   &  1.7363e-3    &  2.1493             & 77.64     &  1.7364e-3    &  2.1494             & 159.14      \\
			 0.75        & 1/8     & 1/32   &  3.3266e-4    &  2.3839             & 176.50    &  3.3266e-4    &  2.3840             & 309.35    \\
		                 & 1/16    & 1/64   &  9.3963e-5    &  1.8239             & 414.11    &  9.3963e-5    &  1.8239             & 681.68      \\
     \noalign{\smallskip}\hline
     \end{tabular}
   \end{table}

   In addition, by the results in Table \ref{tab:2}, we can see that the TTGCN finite difference scheme will save more computational cost than the SCN finite difference scheme as the value of $k$ increases.

    \begin{table}\small\centering
     	\caption{The $L^2$-errors, convergence rates and CPU time (seconds) with $h=1/100$ and $\alpha=0.5$ for Example \ref{ex1}.}
     \label{tab:2}  
     \begin{tabular}{ccccccccc}
      \hline\noalign{\smallskip}
     		$k $    & $\tau_C$   & $\tau_F$  &  $E_{TTGCN}$    & $rate^{t}_{TTGCN}$    &$CPU(s)$  &  $E_{SCN}$    & $rate^{t}_{SCN}$   &$CPU(s)$  \\
     \noalign{\smallskip}\hline\noalign{\smallskip}
                         & 1/3     & 1/6    &  2.5490e-2    &  *                  & 40.93     &  2.5490e-2    &  *                  & 61.24    \\
			             & 1/6     & 1/12   &  7.3298e-3    &  1.7981             & 83.19     &  7.3299e-3    &  1.7981             & 120.88   \\
			 2           & 1/12    & 1/24   &  1.8622e-3    &  1.9767             & 181.48    &  1.8623e-3    &  1.9767             & 232.67   \\
		                 & 1/24    & 1/48   &  4.0706e-4    &  2.1937             & 391.92    &  4.0706e-4    &  2.1937             & 474.09   \\
     \noalign{\smallskip}\hline
                         & 1/2     & 1/6    &  2.5489e-2    &  *                  & 31.25     &  2.5490e-2    &  *                  & 61.91    \\
			             & 1/4     & 1/12   &  7.3297e-3    &  1.7980             & 65.04     &  7.3299e-3    &  1.7981             & 120.44   \\
			 3           & 1/8     & 1/24   &  1.8622e-3    &  1.9767             & 142.96    &  1.8623e-3    &  1.9767             & 232.99   \\
		                 & 1/16    & 1/48   &  4.0706e-4    &  2.1937             & 320.59    &  4.0706e-4    &  2.1937             & 479.59   \\
     \noalign{\smallskip}\hline
                         & 1/2     & 1/10   &  1.0281e-2    &  *                  & 40.04     &  1.0282e-2    &  *                  & 107.69    \\
			             & 1/4     & 1/20   &  2.7071e-3    &  1.9252             & 89.51     &  2.7071e-3    &  1.9253             & 198.24   \\
			 5           & 1/8     & 1/40   &  6.1692e-4    &  2.1336             & 214.16    &  6.1693e-4    &  2.1336             & 389.39   \\
		                 & 1/16    & 1/80   &  1.1915e-4    &  2.3723             & 531.10    &  1.1915e-4    &  2.3724             & 913.59   \\
     \noalign{\smallskip}\hline
     \end{tabular}
   \end{table}

    \vskip 0.2mm
       When the time step $\tau_C=1/128$ and $\tau_F=1/512$ are fixed, in Tables \ref{tab:3}, the convergence order of the two schemes in space is 2 according to the numerical results. Therefore, the convergence results in the space-time direction are in good agreement with the theoretical analysis.
 \begin{table}\small\centering
     	\caption{The $L^2$-errors and convergence rates with $\tau_C=1/128$ and $\tau_F=1/512$ for Example \ref{ex1}.}
       \label{tab:3}       
     	\begin{tabular}{ccccccccc}
        \hline\noalign{\smallskip}
     		$\alpha $  & $h$ &  $E_{TTGCN}$ & $rate^{x}_{TTGCN}$        &  $E_{SCN}$ & $rate^{x}_{SCN}$   \\
       \noalign{\smallskip}\hline\noalign{\smallskip}
     			             & 1/2   &  3.8785e-1    &  *            &  3.8785e-1   &  -             \\
			                 & 1/4   &  9.1992e-2    &  2.0759       &  9.1992e-2   &  2.0759          \\
			  0.20           & 1/8   &  2.2621e-2    &  2.0239       &  2.2621e-2   &  2.0239           \\
			                 & 1/16  &  5.6309e-3    &  2.0062       &  5.6309e-3   &  2.0062          \\
			                 & 1/32  &  1.4061e-3    &  2.0017       &  1.4061e-3   &  2.0017          \\
       \noalign{\smallskip}\hline\noalign{\smallskip}
			                 & 1/2   &  3.5774e-1    &   *            &  3.5774e-1    &   *            \\
			                 & 1/4   &  8.4731e-2    &  2.0780        &  8.4731e-2    &  2.0780         \\
			  0.50           & 1/8   &  2.0830e-2    &  2.0242        &  2.0830e-2    &  2.0242        \\
			                 & 1/16  &  5.1848e-3    &  2.0063        &  5.1848e-3    &  2.0063        \\
			                 & 1/32  &  1.2945e-3    &  2.0018        &  1.2945e-3    &  2.0018         \\
       \noalign{\smallskip}\hline\noalign{\smallskip}
			                 & 1/2   &  3.2854e-1    &   *           &  3.2854e-1    &   *           \\
			                 & 1/4   &  7.7643e-2    &  2.0811       &  7.7643e-2    &  2.0811        \\
			  0.80           & 1/8   &  1.9081e-2    &  2.0248       &  1.9081e-2    &  2.0248        \\
			                 & 1/16  &  4.7487e-3    &  2.0065       &  4.7487e-3    &  2.0065        \\
			                 & 1/32  &  1.1855e-3    &  2.0020       &  1.1855e-3    &  2.0020         \\
       \noalign{\smallskip}\hline
     	\end{tabular}
     \end{table}

     Fig. 1 compares the computation time of the two-grid method and the standard method in the time direction for the Crank-Nicolson finite difference scheme. It can be observed that the computational cost of the TTGCN finite difference method is lower without losing the accuracy. Also, Fig. 2 gives the $L^2$-norm error for both methods, which can show intuitively second-order convergence for time.

    \begin{figure}
    \renewcommand{\figurename}{Fig.}\centering
    \includegraphics[width=0.7\textwidth]{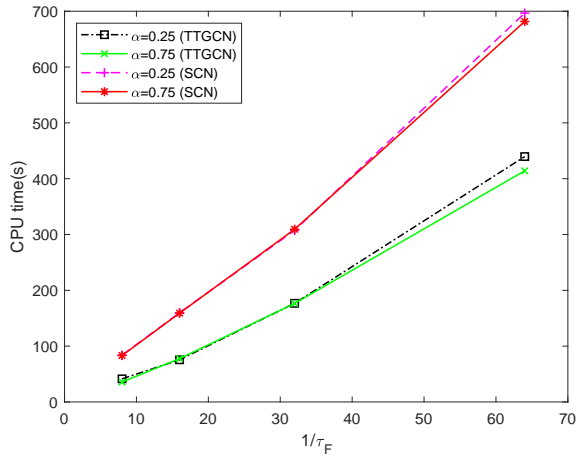}
    \caption {The comparison of two methods for CPU time with $h=1/100$ and $k=4$ for  Example \ref{ex1}.}
    \end{figure}
    \begin{figure}
    \renewcommand{\figurename}{Fig.}\centering
    \includegraphics[width=0.7\textwidth]{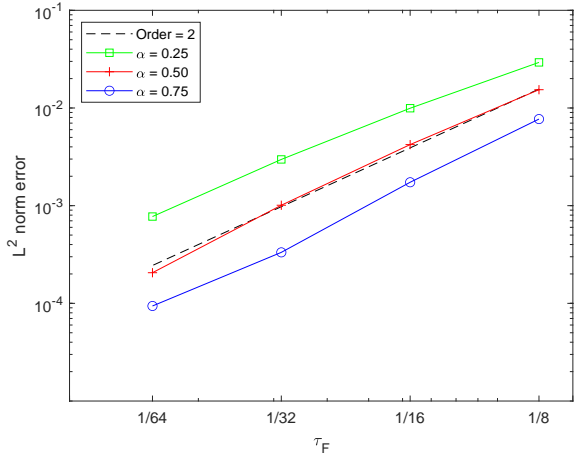}
    \caption {The time convergence order with $h=1/100$ and $k=4$ for Example \ref{ex1}.}
    \end{figure}

    \vskip 0.2mm

  \begin{example}\label{ex2}
   we consider $g(u)=-u-u^3$ and $\mu=1$. The exact solution is given via
   \begin{equation*}
     u(x,y,t)=\frac{t^{\alpha+1}}{\Gamma(2+\alpha)}\sin\pi x\sin\pi y,
   \end{equation*}
   thus, $\psi(x,y)=0$ and the corresponding force term can be obtained as follows
   \begin{equation*}
   \begin{split}
     f(x,y,t)=&\left(\frac{t^{\alpha}}{\Gamma(\alpha+1)}+\frac{(2\pi^2\mu+1) t^{\alpha+1}}{\Gamma(\alpha+2)}+\frac{t^{2\alpha +1}}{\Gamma(2\alpha+2)}\right)\sin\pi x\sin\pi y \\ &+\left(\frac{t^{\alpha+1}}{\Gamma(2+\alpha)}\sin\pi x\sin\pi y\right)^3.
   \end{split}
   \end{equation*}
  \end{example}
   In Table \ref{tab:4}, we give the numerical results with $\alpha=0.25$, $0.5$ and $0.75$ calculated using the TTGCN finite difference method and the SCN finite difference method, respectively. This numerical result fully demonstrates that the computational efficiency of the TTGCN finite difference method is much higher than that of the SCN finite difference method. Also, according to the numerical results in Table \ref{tab:5}, the order of convergence of the two methods in space $\thickapprox 2$. Therefore, the numerical results are consistent with the theoretical analysis. In addition, we also compared with the method in \cite{xu2020time}. It is obvious from Table \ref{tab:6} that the TTGCN finite difference method has higher accuracy and convergence order.

  When $h=1/100$ and $k=4$, Fig. 3 compares the CPU time of the two-grid finite difference method and the standard finite difference method for the time direction, which intuitively demonstrates the effectiveness of our method. Besides, Fig. 4 shows intuitively temporal second-order convergence of two-grid finite difference method.

   \begin{table}\small\centering
     	\caption{The $L^2$-errors, convergence rates and CPU time (seconds) with $h=1/100$ and $k=4$ for Example \ref{ex2}.}
     \label{tab:4}  
     \begin{tabular}{ccccccccc}
      \hline\noalign{\smallskip}
     		$\alpha $    & $\tau_C$   & $\tau_F$  &  $E_{TTGCN}$    & $rate^{t}_{TTGCN}$    &$CPU(s)$  &  $E_{SCN}$    & $rate^{t}_{SCN}$   &$CPU(s)$  \\
     \noalign{\smallskip}\hline\noalign{\smallskip}
                         & 1/2     & 1/8    &  2.9535e-2    &  *                  & 34.31     &  2.9535e-2    &  *                  & 64.26    \\
			             & 1/4     & 1/16   &  1.0043e-2    &  1.5563             & 71.10     &  1.0043e-2    &  1.5563             & 125.59     \\
			  0.25       & 1/8     & 1/32   &  3.0208e-3    &  1.7331             & 161.96    &  3.0208e-3    &  1.7331             & 264.99      \\
		                 & 1/16    & 1/64   &  7.9828e-4    &  1.9200             & 412.81    &  7.9828e-4    &  1.9200             & 561.79      \\
     \noalign{\smallskip}\hline\noalign{\smallskip}
                         & 1/2     & 1/8    &  1.5532e-2    &  *                  & 32.46     &  1.5532e-2    &  *                  & 58.22    \\
			             & 1/4     & 1/16   &  4.2840e-3    &  1.8582             & 69.57     &  4.2840e-3    &  1.8582             & 120.92      \\
			 0.5         & 1/8     & 1/32   &  1.0448e-3    &  2.0357             & 156.46    &  1.0448e-3    &  2.0357             & 242.71    \\
		                 & 1/16    & 1/64   &  2.2614e-4    &  2.2080             & 391.87    &  2.2614e-4    &  2.2080             & 561.08      \\
     \noalign{\smallskip}\hline\noalign{\smallskip}
                         & 1/2     & 1/8    &  7.7945e-3    &  *                  & 30.86     &  7.7946e-3    &  *                  & 59.13    \\
			             & 1/4     & 1/16   &  1.7861e-3    &  2.1256             & 67.44     &  1.7861e-3    &  2.1256             & 113.23      \\
			 0.75        & 1/8     & 1/32   &  3.6423e-4    &  2.2939             & 156.47    &  3.6423e-4    &  2.2939             & 233.35    \\
		                 & 1/16    & 1/64   &  6.6431e-5    &  2.4549             & 402.73    &  6.6431e-5    &  2.4549             & 545.91      \\
     \noalign{\smallskip}\hline
     \end{tabular}
   \end{table}

        \begin{table}\small
     	\caption{The $L^2$-errors and convergence rates with $\tau_C=1/128$ and $\tau_F=1/512$ for Example \ref{ex2}.}
       \label{tab:5}       
     	\resizebox{\textwidth}{!}{
     \begin{tabular}{ccccccccccc}
        \hline\noalign{\smallskip}
             \multirow{2}{*}{$h$} &\multicolumn{4}{c}{$\alpha=0.2$} &&  \multicolumn{4}{c}{$\alpha=0.8$}      \\
              \cline{2-5} \cline{7-10}
              & $E_{TTGCN}$ & $rate^{x}_{TTGCN}$    &  $E_{SCN}$ & $rate^{x}_{SCN}$ && $E_{TTGCN}$ & $rate^{x}_{TTGCN}$  &  $E_{SCN}$ & $rate^{x}_{SCN}$ \\
       \noalign{\smallskip}\hline\noalign{\smallskip}
     	1/2   &  1.8132e-1    &  *            &  1.8132e-1    &  *      &&  1.1940e-1    &   *           &  1.1941e-1    &   *         \\
		1/4   &  4.3294e-2    &  2.0663       &  4.3296e-2    &  2.0663 &&  2.8125e-2    &  2.0859       &  2.8133e-2    &  2.0856     \\
		1/8   &  1.0650e-2    &  2.0233       &  1.0652e-2    &  2.0231 &&  6.9065e-3    &  2.0259       &  6.9139e-2    &  2.0247     \\
		1/16  &  2.6486e-3    &  2.0070       &  2.6519e-3    &  2.0060 &&  1.7133e-3    &  2.0112       &  1.7206e-3    &  2.0066     \\
		1/32  &  6.5992e-4    &  2.0054       &  6.6217e-4    &  2.0017 &&  4.2551e-4    &  2.0095       &  4.2932e-4    &  2.0028     \\
       \noalign{\smallskip}\hline
     	\end{tabular}}
     \end{table}

    \begin{figure}
    \renewcommand{\figurename}{Fig.}\centering
    \includegraphics[width=0.7\textwidth]{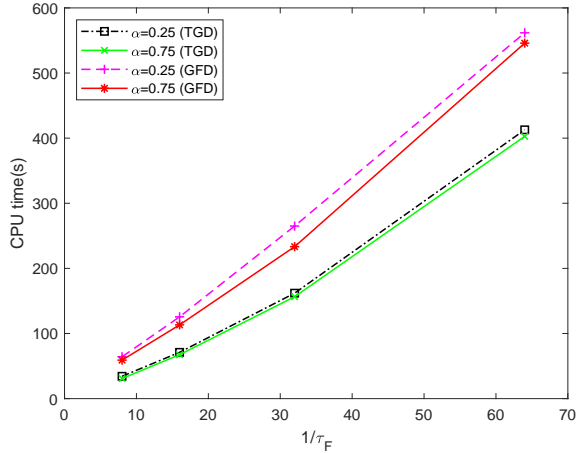}
    \caption {The CPU time for Example \ref{ex2} with $h=1/100$ and $k=4$.}
    \end{figure}
    \begin{figure}
    \renewcommand{\figurename}{Fig.}\centering
    \includegraphics[width=0.7\textwidth]{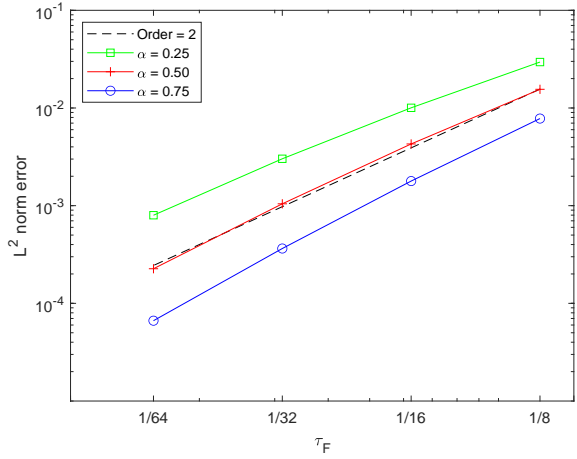}
    \caption {The time convergence order for Example \ref{ex2} with $h=1/100$ and $k=4$.}
    \end{figure}

    \begin{table}\small\centering
     	\caption{The comparison between the scheme (\ref{eq3.9})-(\ref{eq3.13}) and the scheme \cite{xu2020time} whit $h=1/100$ and $k=4$ for Example \ref{ex2}.}
     \label{tab:6}  
     \begin{tabular}{cccccccccc}
      \hline\noalign{\smallskip}
     	       \multirow{2}{*}{$\alpha$} & \multirow{2}{*}{$\tau_C$}   & \multirow{2}{*}{$\tau_F$}  & &    \multicolumn{2}{c}{Scheme (\ref{eq3.9})-(\ref{eq3.13})} & &
      \multicolumn{2}{c}{Scheme in \cite{xu2020time}}  \\
     \cline{5-6} \cline{8-9}
                         &         &        & &     $E_{TTGCN}$   & $rate^{t}_{TTGCN}$        &   &$E$        &$rate^{t}$   \\
     \noalign{\smallskip}\hline\noalign{\smallskip}
                         & 1/2     & 1/8    & & 2.9535e-2    &  *         &    &  3.9266e-3    &  *                      \\
			             & 1/4     & 1/16   & &1.0043e-2     &  1.5563    &    &  1.9639e-3    &  0.9996                  \\
			  0.25       & 1/8     & 1/32   & & 3.0208e-3    &  1.7331    &    &  9.7001e-4    &  1.0176                   \\
		                 & 1/16    & 1/64   & & 7.9828e-4    &  1.9200    &    &  4.6979e-4    &  1.0460                   \\
     \noalign{\smallskip}\hline\noalign{\smallskip}
                         & 1/2     & 1/8    & & 1.5532e-2    &  *         &    &  7.6809e-3    &  *                      \\
			             & 1/4     & 1/16   & & 4.2840e-3    &  1.8582    &    &  3.8683e-3    &  0.9896                   \\
			 0.5         & 1/8     & 1/32   & & 1.0448e-3    &  2.0357    &    &  1.9311e-3    &  1.0023                 \\
		                 & 1/16    & 1/64   & & 2.2614e-4    &  2.2080    &    &  9.5444e-4    &  1.0167                   \\
     \noalign{\smallskip}\hline\noalign{\smallskip}
                         & 1/2     & 1/8    & & 7.7945e-3    &  *         &    &  9.7620e-3    &  *                     \\
			             & 1/4     & 1/16   & & 1.7861e-3    &  2.1256    &    &  4.9287e-3    &  0.9860                   \\
			 0.75        & 1/8     & 1/32   & & 3.6423e-4    &  2.2939    &    &  2.2683e-3    &  0.9977               \\
		                 & 1/16    & 1/64   & & 6.6431e-5    &  2.4549    &    &  1.2266e-3    &  1.0088                  \\
     \noalign{\smallskip}\hline
     \end{tabular}
   \end{table}

    \vskip 0.2mm
    \begin{example}\label{ex3}
      we consider
    \begin{equation*}
      \begin{aligned}
        &u_{t}-\Delta u-I^{(\alpha)}\Delta u=-u^3,\quad (x,y,t)\in\Omega \times (0,T], \\
       & u(x,y,t)=0,\qquad (x,y)\in\partial\Omega,\quad t\in [0,T], \\
        &u(x,y,0)=xy(1-x)(1-y),\qquad (x,y)\in\Omega.
  \end{aligned}
    \end{equation*}
    \end{example}
    In this example, since the exact solution is unknown, we assume that the numerical solution with fixed spatial step $h=1/32$ and half of the original time steps $\tau_C$ and $\tau_F$ is the ``exact" solution. From Table \ref{tab:7}, we can see that for the time direction convergence order TTGCN and SCN finite difference methods in both can approach 2, which agrees with the theoretical analysis.

    \begin{table}\small\centering
     	\caption{The $L^2$-errors and convergence rates with $h=1/32$ and $k=4$ for Example \ref{ex3}}
     \label{tab:7}  
     \begin{tabular}{cccccccc}
      \hline\noalign{\smallskip}
     		$\alpha $    & $\tau_C$   & $\tau_F$  &  $E_{TTGCN}$    & $rate^{t}_{TTGCN}$      &  $E_{SCN}$    & $rate^{t}_{SCN}$     \\
     \noalign{\smallskip}\hline\noalign{\smallskip}
                         & 1/12     & 1/48    &  6.0750e-7    &  *                &  6.0563e-7       &  *                  \\
			             & 1/24     & 1/96    &  1.8258e-7    &  1.7344           &  1.8214e-7       &  1.7334             \\
			  0.25       & 1/48     & 1/192   &  4.9624e-8    &  1.8794           &  4.9558e-8       &  1.8779             \\
		                 & 1/96     & 1/384   &  1.2745e-8    &  1.9611           &  1.2739e-8       &  1.9599             \\
     \noalign{\smallskip}\hline\noalign{\smallskip}
                         & 1/12     & 1/48    &  1.2281e-6    &  *                &  1.2246e-6       &  *                  \\
			             & 1/24     & 1/96    &  3.6180e-7    &  1.7661           &  3.6036e-7       &  1.7648             \\
			  0.5        & 1/48     & 1/192   &  9.7691e-8    &  1.8860           &  9.7595e-8       &  1.8846             \\
		                 & 1/96     & 1/384   &  2.5272e-8    &  1.9507           &  2.5363e-8       &  1.9498             \\
     \noalign{\smallskip}\hline\noalign{\smallskip}
                         & 1/12     & 1/48    &  2.1532e-6    &  *                &  2.1477e-6       &  *                  \\
			             & 1/24     & 1/96    &  6.3459e-7    &  1.7626           &  6.3351e-7       &  1.7614             \\
			  0.75       & 1/48     & 1/192   &  1.7308e-7    &  1.8744           &  1.7294e-7       &  1.8731             \\
		                 & 1/96     & 1/384   &  4.5190e-8    &  1.9373           &  4.5177e-8       &  1.9366             \\
     \noalign{\smallskip}\hline
     \end{tabular}
   \end{table}

\section*{Declaration of Competing Interest}
The authors declare that they have no conflict of interest.
\section*{Acknowledgment}
The project was supported by Postgraduate Scientific Research Innovation Project of Hunan Province (No. CX20220469).

\bibliographystyle{unsrt}
\bibliography{Bibfile}

\begin{thebibliography}{10}

\bibitem{podlubny1999fractional}
Igor Podlubny.
\newblock Fractional differential equations, academic press, san diego, 1999.

\bibitem{qiao2021second}
Leijie Qiao, Wenlin Qiu, and Da~Xu.
\newblock A second-order {A}{D}{I} difference scheme based on non-uniform
  meshes for the three-dimensional nonlocal evolution problem.
\newblock {\em Computers \& Mathematics with Applications}, 102:137--145, 2021.

\bibitem{mustapha2010second}
Kassem Mustapha and Hussein Mustapha.
\newblock A second-order accurate numerical method for a semilinear
  integro-differential equation with a weakly singular kernel.
\newblock {\em IMA journal of numerical analysis}, 30(2):555--578, 2010.

\bibitem{friedman1967volterra}
Avner Friedman and Marvin Shinbrot.
\newblock Volterra integral equations in banach space.
\newblock {\em Transactions of the American Mathematical Society},
  126(1):131--179, 1967.

\bibitem{gurtin1968general}
Morton~E Gurtin and Allen~C Pipkin.
\newblock A general theory of heat conduction with finite wave speeds.
\newblock {\em Archive for Rational Mechanics and Analysis}, 31(2):113--126,
  1968.

\bibitem{miller1978integrodifferential}
RK~Miller.
\newblock An integrodifferential equation for rigid heat conductors with
  memory.
\newblock {\em Journal of Mathematical Analysis and Applications},
  66(2):313--332, 1978.

\bibitem{chen2017second}
Hongbin Chen, Da~Xu, and Yulong Peng.
\newblock A second order {B}{D}{F} alternating direction implicit difference
  scheme for the two-dimensional fractional evolution equation.
\newblock {\em Applied Mathematical Modelling}, 41:54--67, 2017.

\bibitem{khebchareon2015alternating}
Morrakot Khebchareon, Amiya~K Pani, and Graeme Fairweather.
\newblock Alternating direction implicit {G}alerkin methods for an evolution
  equation with a positive-type memory term.
\newblock {\em Journal of Scientific Computing}, 65(3):1166--1188, 2015.

\bibitem{kim1998spectral}
Chang~Ho Kim and U~Jin Choi.
\newblock Spectral collocation methods for a partial integro-differential
  equation with a weakly singular kernel.
\newblock {\em The ANZIAM Journal}, 39(3):408--430, 1998.

\bibitem{larsson1998numerical}
Stig Larsson, Vidar Thom{\'e}e, and Lars Wahlbin.
\newblock Numerical solution of parabolic integro-differential equations by the
  discontinuous {G}alerkin method.
\newblock {\em Mathematics of computation}, 67(221):45--71, 1998.

\bibitem{li2013alternating}
Limei Li and Da~Xu.
\newblock Alternating direction implicit-{E}uler method for the two-dimensional
  fractional evolution equation.
\newblock {\em Journal of Computational Physics}, 236:157--168, 2013.

\bibitem{wang2022crank}
Yuan-Ming Wang and Yu-Jia Zhang.
\newblock A {C}rank-{N}icolson-type compact difference method with the uniform
  time step for a class of weakly singular parabolic integro-differential
  equations.
\newblock {\em Applied Numerical Mathematics}, 172:566--590, 2022.

\bibitem{dehghan2017spectral}
Mehdi Dehghan and Mostafa Abbaszadeh.
\newblock Spectral element technique for nonlinear fractional evolution
  equation, stability and convergence analysis.
\newblock {\em Applied Numerical Mathematics}, 119:51--66, 2017.

\bibitem{jiang2020adi}
Huifa Jiang, Da~Xu, Wenlin Qiu, and Jun Zhou.
\newblock An {A}{D}{I} compact difference scheme for the two-dimensional
  semilinear time-fractional mobile--immobile equation.
\newblock {\em Computational and Applied Mathematics}, 39(4):1--17, 2020.

\bibitem{hendy2022energy}
Ahmed~S Hendy, TR~Taha, D~Suragan, and Mahmoud~A Zaky.
\newblock An energy-preserving computational approach for the semilinear space
  fractional damped {K}lein--{G}ordon equation with a generalized scalar
  potential.
\newblock {\em Applied Mathematical Modelling}, 108:512--530, 2022.

\bibitem{liao2019unconditional}
Hong-lin Liao, Yonggui Yan, and Jiwei Zhang.
\newblock Unconditional convergence of a fast two-level linearized algorithm
  for semilinear subdiffusion equations.
\newblock {\em Journal of Scientific Computing}, 80(1):1--25, 2019.

\bibitem{xu1996two}
Jinchao Xu.
\newblock Two-grid discretization techniques for linear and nonlinear
  {P}{D}{E}s.
\newblock {\em SIAM journal on numerical analysis}, 33(5):1759--1777, 1996.

\bibitem{xu1994novel}
Jinchao Xu.
\newblock A novel two-grid method for semilinear elliptic equations.
\newblock {\em SIAM Journal on Scientific Computing}, 15(1):231--237, 1994.

\bibitem{dawson1998two}
Clint~N Dawson, Mary~F Wheeler, and Carol~S Woodward.
\newblock A two-grid finite difference scheme for nonlinear parabolic
  equations.
\newblock {\em SIAM journal on numerical analysis}, 35(2):435--452, 1998.

\bibitem{li2017two}
Xiaoli Li and Hongxing Rui.
\newblock A two-grid block-centered finite difference method for the nonlinear
  time-fractional parabolic equation.
\newblock {\em Journal of Scientific Computing}, 72(2):863--891, 2017.

\bibitem{bajpai2014two}
Saumya Bajpai and Neela Nataraj.
\newblock On a two-grid finite element scheme combined with {C}rank--{N}icolson
  method for the equations of motion arising in the {K}elvin--{V}oigt model.
\newblock {\em Computers \& Mathematics with Applications}, 68(12):2277--2291,
  2014.

\bibitem{chen2011two}
Luoping Chen and Yanping Chen.
\newblock Two-grid method for nonlinear reaction-diffusion equations by mixed
  finite element methods.
\newblock {\em Journal of Scientific Computing}, 49(3):383--401, 2011.

\bibitem{chen2010two}
Chuanjun Chen and Wei Liu.
\newblock A two-grid method for finite volume element approximations of
  second-order nonlinear hyperbolic equations.
\newblock {\em Journal of Computational and Applied Mathematics},
  233(11):2975--2984, 2010.

\bibitem{liu2018time}
Yang Liu, Zudeng Yu, Hong Li, Fawang Liu, and Jinfeng Wang.
\newblock Time two-mesh algorithm combined with finite element method for time
  fractional water wave model.
\newblock {\em International Journal of Heat and Mass Transfer},
  120:1132--1145, 2018.

\bibitem{xu2020time}
Da~Xu, Jing Guo, and Wenlin Qiu.
\newblock Time two-grid algorithm based on finite difference method for
  two-dimensional nonlinear fractional evolution equations.
\newblock {\em Applied Numerical Mathematics}, 152:169--184, 2020.

\bibitem{mclean2007second}
William McLean and Kassem Mustapha.
\newblock A second-order accurate numerical method for a fractional wave
  equation.
\newblock {\em Numerische Mathematik}, 105(3):481--510, 2007.

\bibitem{dedic2001euler}
Lj~Dedi{\'c}, M~Mati{\'c}, and J~Pe{\v{c}}ari{\'c}.
\newblock On euler trapezoid formulae.
\newblock {\em Applied mathematics and computation}, 123(1):37--62, 2001.

\bibitem{chen2015alternating}
Hongbin Chen, Da~Xu, and Yulong Peng.
\newblock An alternating direction implicit fractional trapezoidal rule type
  difference scheme for the two-dimensional fractional evolution equation.
\newblock {\em International Journal of Computer Mathematics},
  92(10):2178--2197, 2015.

\bibitem{sloan1986time}
IH~Sloan and V~Thom{\'e}e.
\newblock Time discretization of an integro-differential equation of parabolic
  type.
\newblock {\em SIAM Journal on Numerical Analysis}, 23(5):1052--1061, 1986.

\end{thebibliography}

\end{document}